\font\smallit=cmti10
\DeclarePairedDelimiter{\floor}{\lfloor}{\rfloor}
\renewcommand\section{\@startsection {section}{1}{\z@}
{-30pt \@plus -1ex \@minus -.2ex}
{2.3ex \@plus.2ex}
{\normalfont\normalsize\bfseries\boldmath}}
\renewcommand\subsection{\@startsection{subsection}{2}{\z@}
{-3.25ex\@plus -1ex \@minus -.2ex}
{1.5ex \@plus .2ex}
{\normalfont\normalsize\bfseries\boldmath}}
\renewcommand{\@seccntformat}[1]{\csname the#1\endcsname. }
\newtheorem{thm}{Theorem}[section]
\newtheorem{lemm}[thm]{Lemma}
\newtheorem{cor}[thm]{Corollary}
\newtheorem{definition}[thm]{Definition}
\title{\textsc{flipping coins}}
\newcommand{\GL}{{G^\mathcal{L}}}
\newcommand{\GR}{{G^\mathcal{R}}}
\newcommand{\HL}{H^\mathcal{L}}
\newcommand{\HR}{H^\mathcal{R}}
\newcommand{\pos}[1]{\underline{#1}}
\newcommand{\name}{\textsc{flipping coins}}
\begin{document}
\begin{center}
\uppercase{The game of Flipping coins}
\vskip 20pt
{\bf Anthony Bonato\footnote{Supported by an NSERC Discovery grant.}}\\
{\smallit Ryerson University, Toronto, Ontario, Canada}\\
{\tt abonato@ryerson.ca}\\
\vskip 10pt
{\bf  Melissa A. Huggan\footnote{Supported by an NSERC Postdoctoral fellowship.}}\\
{\smallit Ryerson University, Toronto, Ontario, Canada}\\
{\tt melissa.huggan@ryerson.ca}\\
\vskip 10pt
{\bf Richard J. Nowakowski\footnote{Supported by an NSERC Discovery grant.}}\\
{\smallit Dalhousie University, Halifax, Nova Scotia, Canada}\\
{\tt r.nowakowski@dal.ca}\\
\end{center}
\vskip 20pt

\date{}

\begin{abstract} We consider \name, a partizan version of the impartial game \textsc{turning turtles}, played on lines of coins. We show the values of this game are numbers, and these are found by first applying a reduction, then decomposing the position into an iterated ordinal sum. This is unusual since moves in the middle of the line do not eliminate the rest of the line. Moreover, when $G$ is decomposed into lines $H$ and $K$, then $G=(H:K^R)$. This is in contrast to \textsc{hackenbush strings}
where $G= (H:K)$.
\end{abstract}

\noindent
{\sc Keywords}: Combinatorial Game Theory, ordinal sum, \name\\

\section{Introduction}\label{secintro}
In Winning Ways Volume 3~\cite{Berlekamp}, Berlekamp, Conway, and Guy introduced \textsc{turning turtles} and considered many variants. Each game involves a finite row of turtles, either on feet or backs, and a move is to turn one turtle over onto its back, with the option of flipping a number of other turtles, to the left, each to the opposite of its current state (feet or back). The number depends on the rules of the specific game. The authors moved to playing with coins as playing with turtles is cruel.

These games can be solved using the Sprague-Grundy theory for impartial games \cite{Berlekamp}, but the structure and strategies of some variants are interesting. The strategy for: \textsc{moebius} (flip up to $5$ coins) played with 18 coins, involves \textit{M\"obius transformations};
\textsc{mogul} (flip up to $7$ coins) on $24$ coins, involves the \textit{miracle octad generator} developed by R. Curtis in his work on the Mathieu group $M_{24}$ and the Leech lattice, \cite{M24,Curtis}; \textsc{ternups} (flip three equally spaced coins) requires ternary expansions; and \textsc{turning corners}, a two-dimensional version where the corners of a rectangle are flipped,  needs nim-multiplication.

We consider a simple partizan version of \textsc{turning turtles}, also played with coins. We give a complete solution and show that it involves ordinal sums. This is somewhat surprising since moves in the middle of the line do not eliminate moves at the end. Compare this with \textsc{hackenbush strings} \cite{Berlekamp}, and
\textsc{domino shave} \cite{CarvalhoHNS1}.

We will denote heads by $0$ and tails by $1$.  Our partizan version will be played with a line of coins, represented by a $0$-$1$ sequence, $d_1d_2\ldots d_n$, where $d_i\in\{0,1\}$.
 To this position, we associate the binary number $\sum_{i=1}^{n}d_i2^{i-1}$. Left moves by choosing some pair of coins  $d_i,d_j$, $i<j$, where $d_i=d_j=1$ and flips them over so that both coins are $0$s. Right also chooses a pair $d_k,d_\ell$, $k<\ell$, with $d_k=0$ and $d_\ell=1$,
and flips them over. If $j$ is the greatest index such that $d_j=1$, then $d_k$, $k>j$, will be deleted.
For example,
\[1011 = \{0001, 001, 1 \mid 1101, 111\}.\]
The game eventually ends since the associated binary number decreases with every move.  We call this game \name.

 The game is biased to Left. If there are a non-zero even number of $1$s in a position, then Left always has a move; that is, she will win.
Left also wins any non-trivial position starting with $1$. However, there are positions that Right wins.
 The two-part method to find the outcomes and values of the remaining positions can be applied to all positions. First, apply a modification to the position (unless it is all 1s), which reduces the number of consecutive $1$s to at most three. After this reduction, build an iterated ordinal sum, by successively deleting everything after the third last $1$, this deleted position determines the value of the next term in the ordinal sum. As a consequence, the original position is a Right win, if the position remaining at the end is of the form $0\ldots01$, and the value is given by the ordinal sum.

Necessary background for numbers is in Section~\ref{sec:numbers}. Section~\ref{sec: main results} contains results about outcomes, and it also includes our main results. First, we show that the values are numbers in Theorem~\ref{thm:numbers2}. Next, an algorithm to find the value of a position is presented, and Theorem~\ref{thm: main theorem} states that the value given by the algorithm is correct.

The actual analysis is in Section~\ref{sec:proofs}. It starts by identifying the best moves for both players in Theorem~\ref{thm:bestmoves}. This leads directly to the core result Lemma~\ref{lem:osvalue}, which shows that the value of a position is an ordinal sum.  The ordinal sum decomposition of $G$ is found as follows.
Let $G^L$ be the position after the Left move that removes the rightmost $1$s. Let $H$ be the string $G\setminus G^L$; that is, the substring eliminated by Left's move. Let $H^R$ be the result of Right's best move in $H$. Now, we have that $G=G^L:H^R$. In contrast, the ordinal sums for \textsc{hackenbush strings} and \textsc{domino shave} \cite{CarvalhoHNS1},  involve the value of $H$ not $H^R$.

The proof of Theorem~\ref{thm: main theorem} is given in Section~\ref{sec: theorem proof}. The final section includes a brief discussion of open problems.\\

Finally we pose a question for the reader, which we answer at the end of Section~\ref{sec: theorem proof}: Who wins $0101011111+1101100111+0110110110111$ and how?

 \section{Numbers}\label{sec:numbers}
All the values in this paper are numbers and this section contains all the necessary background to make the paper self-contained. For further details, consult \cite{Albert,Siegel}. Positions are written in terms of their options, that is $G=\{\GL\mid \GR\}$.

\begin{definition}{\rm\cite{Albert,Berlekamp,Siegel}}
Let $G$ be a number whose options are numbers and let  $G^L$, $G^R$ be the Left and Right options of the canonical form of $G$.
\begin{enumerate}
\item If there is an integer $k$, $G^L<k<G^R$, or if either $G^L$ or $G^R$ does not exist, then $G$ is the integer, say $n$, closest to zero that satisfies $G^L<n<G^R$.
\item If both $G^L$ and $G^R$ exist and the previous case does not apply, then $G=\frac{p}{2^q}$, where $q$ is the least non-negative integer such that there is an odd integer $p$ satisfying $G^L <\frac{p}{2^q}<G^R$.
\end{enumerate}
\end{definition}

The properties of numbers required for this paper are contained in the next two theorems.

\begin{thm}\label{thm:literature}{\rm\cite{Albert,Berlekamp,Siegel}}
Let $G$ be a number whose options are numbers and let  $G^L$, $G^R$ be the Left and Right options of the canonical form of $G$. If $G'$ and $G''$ are any Left and Right options respectively, then \[G'\leqslant G^L<G<G^R\leqslant G''.\]
\end{thm}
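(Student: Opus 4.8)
The plan is to read the chain off the recursive description of numbers. The two inner inequalities $G^L<G<G^R$ are an instance of the simplicity rule, while the two outer ones follow once one knows that the canonical form of a number has at most one option on each side. I would argue by induction on the birthday of $G$, establishing simultaneously three statements: (i) a number $G$ equals the simplest number lying strictly between its options; (ii) the canonical form of a number is one of the standard short forms; and (iii) the displayed inequalities. These have to be proved together, because the proof of (i) for $G$ invokes (ii) and (iii) for the options of $G$.

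For the core step, let $G$ have all options numbers; since $G$ is a number, no Left option is $\geqslant$ a Right option, so writing $a$ for the largest Left option (and $a=-\infty$ if there is none) and $b$ for the smallest Right option (and $b=+\infty$ otherwise) we have $a<b$. Let $z$ be the number picked out by the two cases of the definition: the integer nearest $0$ in $(a,b)$ when one exists, and otherwise the dyadic rational of least denominator in $(a,b)$; an archimedean/binary-search argument shows $z$ exists and is unique. To see $G=z$ I would show the second player wins $G-z$. If Left moves first and plays in the $G$ component, reaching $G'-z$ for a Left option $G'$, then $G'\leqslant a<z$, so $G'-z$ is a negative number and Right wins. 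If Left instead plays in the $-z$ component, reaching $G-z^R$ for the (unique) canonical Right option $z^R$ of $z$, the key point is $z^R\geqslant b$: otherwise $z^R\in(a,b)$, and inspecting the standard forms (canonically $\{(p-1)/2^q\mid(p+1)/2^q\}$ for $p/2^q$ in lowest terms with $q\geqslant 1$, and $\{\ \mid n+1\}$ for a negative integer $n$) shows $z^R$ would be strictly simpler than $z$, contradicting the choice of $z$. Given $z^R\geqslant b$, Right answers by moving $G$ to a Right option of value $b$ (which exists precisely when Left had a move available in $-z$), reaching a position of value $b-z^R\leqslant 0$ with Left to move, so Left loses. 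The case of Right moving first is symmetric. Hence $G=z$; in particular $a<G<b$, so every Left option of $G$ is strictly below $G$ and every Right option strictly above it.

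Since $G$ now equals an integer or a dyadic rational, its canonical form is the corresponding standard form, which has at most one Left option $G^L$ and at most one Right option $G^R$; applying the previous paragraph to this form gives $G^L<G<G^R$, with the convention that an absent option imposes no constraint. Finally, when $G'$ and $G''$ range over Left and Right options of the canonical form they are just $G^L$ and $G^R$, and this assembles into $G'\leqslant G^L<G<G^R\leqslant G''$; for a number presented in an arbitrary number-valued form one still gets the consequence actually used downstream, namely every Left option below $G$ and every Right option above. The main obstacle is the move by Left into the $-z$ component: this is exactly where the minimality built into $z$ is used, through the inequality $z^R\geqslant b$, and one must take care to invoke the shape of the canonical form of a number as an induction hypothesis about simpler games rather than as the statement being proved, so that (i)--(iii) genuinely run as a single combined induction on birthdays.
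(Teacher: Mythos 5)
The paper offers no proof of this theorem --- it is imported from the literature \cite{Albert,Berlekamp,Siegel} --- so there is nothing internal to compare against; judged on its own, your argument is the standard simplicity-theorem bootstrap and is essentially correct. The simultaneous induction on birthday, the reduction to showing the second player wins $G-z$ for $z$ the simplest number strictly between $a=\max G'$ and $b=\min G''$, and the key inequality $z^R\geqslant b$ extracted from the minimality of $z$ are exactly the textbook route. You were also right to hedge on the outer inequalities: read literally, with $G^L$ a Left option of the \emph{canonical} form and $G'$ an arbitrary Left option of the given form, the claim $G'\leqslant G^L$ is false --- the paper's own example (d), $\tfrac12=\{\tfrac38\mid\tfrac{17}{32}\}$, gives $G'=\tfrac38>0=G^L$. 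The chain is correct when $G^L$ and $G^R$ are taken to be the maximal Left and minimal Right options of the form at hand (which is all the paper uses downstream, e.g.\ to conclude that any single Right option already exceeds $G$), and that is precisely what your inequality $a<G<b$ establishes. The only blemishes are cosmetic: the existence and uniqueness of the simplest $z$ and the fact that the standard short forms are in canonical form are asserted rather than checked, and the parenthetical ``exists precisely when'' overstates an equivalence (the form $\{0\mid 5\}$ of the number $1$ has a Right option while the canonical form of $1$ does not), though the implication your strategy actually needs --- if $z$ has a canonical Right option then $G$ has some Right option --- does hold.
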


Theorem~\ref{thm:literature} shows that if we know that the property holds, we need to only consider the best move for both players in a number.

We include the following examples.
 \begin{enumerate}[label=(\alph*)]
 \item $0=\{\,\mid \, \} = \{-9\mid \,\} = \{-\frac{1}{2}\mid \frac{7}{4}\}$;
 \item $-2 = \{\, \mid -1 \} = \{-\frac{5}{2} \mid  -\frac{31}{16}\}$;
 \item  $1= \{0\mid\, \} = \{0 \mid 100 \}$;
 \item  $\frac{1}{2}= \{0 \mid 1 \} = \{ \frac{3}{8} \mid \frac{17}{32}\}$.
 \end{enumerate}

For games $G$ and $H$, to show that $G \geq H$, we need to show that $G-H \geq 0$. Meaning, we need to show that $G-H$ is a Left win moving second. For more information, see Sections 5.1, 5.8, and 6.3 of the second edition of~\cite{Albert}.

 Let $G$ and $H$ be games. The \textit{ordinal sum} of $G$, the \textit{base}, and $H$, the \textit{exponent}, is
$$G:H = \{\GL, G:\HL\mid \GR, G:\HR\}.$$
Intuitively, playing in $G$ eliminates $H$ but playing in $H$ does not affect $G$. For ease of reading, if an ordinal sum is a term in an expression, then we enclose it in brackets.

Note that $x:0=x=0:x$ since neither player has a move in $0$. We demonstrate how to calculate the values of other positions with the following examples.

\begin{enumerate}[label=(\alph*)]
\item $1:1 = \{1\mid \,\}  = 2$;
\item $1:-1 = \{0\mid 1\} = \frac{1}{2}$;
\item $1:\frac{1}{2} = \{0, (1:0) \mid (1:1)\} = \{0, 1\mid \{1 \mid \,\}\} = \{ 1 \mid 2\} = \frac{3}{2}$;
\item $\frac{1}{2}:1 = \{0, (\frac{1}{2}:0)\mid 1\} = \{0,\frac{1}{2}\mid 1\} = \{\frac{1}{2}\mid 1\} = \frac{3}{4}$;
\item $(1:-1):\frac{1}{2} = (\frac{1}{2}:\frac{1}{2}) = \{0, (\frac{1}{2}:0) \mid 1, (\frac{1}{2}:1)\} = \{0, \frac{1}{2} \mid 1, \frac{3}{4} \} = \{\frac{1}{2}\mid \frac{3}{4}\} = \frac{5}{8}$.
\end{enumerate}

Note that in all cases, players prefer to play in the exponent rather than the base. This is true in all cases, but in this paper all the exponents will be positive.

\begin{thm}\label{thm:oso}
Let $G$ be a number all of whose options are numbers, and let $H\geqslant 0$ be a number.
\begin{enumerate}
\item $G^L<(G:H^L)<(G:H)<(G:H^R)<G^R$.
\item If $H=0$, then $G:H=G$. If $H> 0$, then $(G:H)>G$.
\end{enumerate}
\end{thm}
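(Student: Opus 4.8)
The plan is to prove parts (1) and (2) simultaneously by induction on the birthday of $H$; as throughout, $G^L,G^R,H^L,H^R$ denote the options of the canonical forms, so that $G:H=\{G^L,\,G:H^L\mid G^R,\,G:H^R\}$. For the base case $H=0$ we have $G:H=\{G^L\mid G^R\}$, which is the canonical form of $G$, hence $G:H=G$; this gives (2), and (1) is then just $G^L<G<G^R$ from Theorem~\ref{thm:literature}.

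For the inductive step assume $H>0$. First I would record two facts about the canonical form of a positive number: $H^L\geqslant 0$ (if $H^L<0$ then $0$ would lie strictly between $H^L$ and $H^R$, forcing $H=0$), and $H^R>H>0$ whenever $H^R$ exists. Hence the induction hypothesis applies to $(G,H^L)$ and to $(G,H^R)$: both $G:H^L$ and $G:H^R$ are numbers lying strictly between $G^L$ and $G^R$, with $G:H^L\geqslant G$ (equality iff $H^L=0$) and $G:H^R>G$. Therefore $G^L<G\leqslant G:H^L$, so among the Left options of $G:H$ the option $G^L$ is dominated by $G:H^L$; and if $H$ is not an integer, $G:H^R<G^R$ shows $G^R$ is dominated by $G:H^R$ among the Right options. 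So $G:H$ has the value of $\{G:H^L\mid G:H^R\}$ when $H$ is a non-integer, and of $\{G:H^L\mid G^R\}$ when $H$ is a positive integer.

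The substantive step is to check that in this two-option form the (unique) Left option is strictly below the (unique) Right option; then by the simplicity rule — the Definition of Section~\ref{sec:numbers} together with Theorem~\ref{thm:literature} — $G:H$ is a number equal to the simplest value strictly between those two options, and in particular lies strictly between them. When $H$ is a positive integer the needed inequality $G:H^L<G^R$ is part of the induction hypothesis for $(G,H^L)$. When $H=m/2^k$ is a non-integer in lowest terms, I would use the elementary fact that \emph{one} of $H^L=(H^R)^L$ or $(H^L)^R=H^R$ always holds — a short computation from $H^L=(m-1)/2^k$ and $H^R=(m+1)/2^k$, the alternative depending on $m\bmod 4$ and on whether $k=1$. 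In the first case the induction hypothesis (part (1)) for $(G,H^R)$ gives $G:H^L=G:(H^R)^L<G:H^R$, and in the second case part (1) for $(G,H^L)$ gives $G:H^L<G:(H^L)^R=G:H^R$. I expect this to be the main obstacle: the naive route would deduce $G:H^L<G:H<G:H^R$ from part (1) applied to $H$ itself, which is exactly what we are proving, so the circularity must be broken by linking $H^L$ and $H^R$ directly and invoking the induction hypothesis only for strictly simpler exponents.

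To conclude, $G:H$ is now known to be a number with $G:H^L<G:H<G:H^R$ (or $G:H<G^R$ in the integer case); together with $G^L<G:H^L$ and $G:H^R<G^R$ from the induction hypothesis this is the full chain in (1). For (2), $G:H>G:H^L\geqslant G$, and the inequality is strict, completing the induction.
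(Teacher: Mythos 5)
Your proof is correct, but it takes a genuinely different and considerably heavier route than the paper's. The paper disposes of part (1) in two lines: by the definition of ordinal sum, $G^L$ is a Left option of $G:H^L$, and $G:H^L$ is in turn a Left option of $G:H$, so Theorem~\ref{thm:literature} (options of a number straddle it strictly) immediately gives $G^L<(G:H^L)<(G:H)$, and symmetrically on the Right. For part (2) with $H>0$ the paper plays the difference game $(G:H)-G$ with Right moving first, answering $G^R-G$ and $(G:H)-G^L$ by mirroring and handling $(G:H^R)-G$ by induction. You instead induct on the birthday of $H$, strip the dominated options $G^L$ and $G^R$ to reduce $G:H$ to $\{G:H^L\mid G:H^R\}$, break the circularity by the arithmetic observation that consecutive dyadic options satisfy $H^L=(H^R)^L$ or $(H^L)^R=H^R$, and then invoke the simplicity rule; part (2) falls out as $G:H>G:H^L\geqslant G$. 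The trade-off is real: the paper's argument is short but silently presupposes that $G:H$ and $G:H^L$ are themselves numbers with number options (otherwise Theorem~\ref{thm:literature} does not apply to them), a fact that is standard but not established anywhere in the paper for abstract ordinal sums; your induction proves number-hood along the way and also yields the strictness in (2) without the separate difference-game check, at the cost of the $m\bmod 4$ case analysis and the explicit domination/simplicity bookkeeping. Both are valid; yours is the more self-contained, the paper's the more economical.
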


\begin{proof}
For item (1), by definition,  $G^L$ is a Left option of $G:H^L$ and both are Left options of $G$. Thus $G^L<(G:H^L)<G$ by Theorem~\ref{thm:literature}. The proof is similar for the Right options.

For item (2), if $H=0$, then $G:0=\{\GL\mid\GR\}$ and this is $G$. Suppose $H>0$. Let Right move first in $(G:H)-G$. If Right moves to $G^R-G$ or $(G:H)-G^L$, then Left responds to $G^R-G^R$ and $G^L-G^L$ respectively and wins. If Right moves to $(G:H^R)-G$, then since $H^R>0$,  $(G:H^R)-G>0$ by induction on the options.
\end{proof}

To prove that  all the positions are numbers, we use results from \cite{CarvalhoHNS}. A set of positions from a ruleset is called a \emph{hereditary closed set of positions of a ruleset} 
if it is closed under taking options. This game satisfies ruleset properties introduced in~\cite{CarvalhoHNS}. In particular, the properties are called the \emph{F1 property} and the \emph{F2 property} and are defined formally as follows.
\begin{definition}{\rm \cite{CarvalhoHNS}}
Let $S$ be a hereditary closed ruleset. Given a position $G \in S$, the pair $(G^{L}, G^{R}) \in G^{\mathcal{L}}\times G^{\mathcal{R}}$ satisfies the \emph{F1 property} if there is a $G^{RL} \in G^{R\mathcal{L}}$ such that $G^{RL} \geqslant G^L$ or there is a $G^{LR} \in G^{L\mathcal{R}}$ such that $G^{LR} \leqslant G^R$.
\end{definition}

\begin{definition}\label{def: F1 and F2 properties}{\rm \cite{CarvalhoHNS}}
Let $S$ be a hereditary closed ruleset. Given a position $G \in S$, the pair $(G^{L}, G^{R}) \in G^{\mathcal{L}}\times G^{\mathcal{R}}$ satisfies the \emph{F2 property} if there is a $G^{RL} \in G^{R\mathcal{L}}$ and there is a $G^{LR} \in G^{L\mathcal{R}}$ such that $G^{LR}\leqslant  G^{RL}$.
\end{definition}

As proven in~\cite{CarvalhoHNS}, if all positions of the ruleset satisfy one of these properties, the value of the position is a number.

\begin{thm}\label{thm:numbers}{\rm \cite{CarvalhoHNS}} Let $S$ be a hereditary closed ruleset. Every position in $S$ satisfies either the F1 or the F2 property if and only if every position is a number.
\end{thm}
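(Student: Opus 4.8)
The plan is to prove the equivalence by induction on the game tree (equivalently, on the proper-follower order, which is well founded since the games involved are short), using Theorem~\ref{thm:literature} as the engine at each step. Here I read the hypothesis in the expected way: for every $G\in S$ and every pair $(G^{L},G^{R})\in\GL\times\GR$, that pair has the F1 property or the F2 property. Since the implication needed later is ``every position satisfies F1 or F2 $\Rightarrow$ every position is a number,'' I would carry out that direction in full and sketch the converse.

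For the forward direction, fix $G\in S$ and assume inductively that every proper follower of $G$ is a number. Because $S$ is hereditary closed, every option of $G$, and every option of an option, is a proper follower of $G$ and hence a number. By the Simplicity Theorem for numbers \cite{Albert,Berlekamp,Siegel} --- a game all of whose options are numbers in which every Left option is strictly below every Right option is itself a number --- it suffices to show $G^{L}<G^{R}$ for all $G^{L}\in\GL$ and $G^{R}\in\GR$. Suppose not, and pick a pair with $G^{L}\geqslant G^{R}$; apply the hypothesis to this pair. If F1 holds through a Left option $G^{RL}$ of $G^{R}$ with $G^{RL}\geqslant G^{L}$, then $G^{RL}\geqslant G^{L}\geqslant G^{R}$, contradicting $G^{RL}<G^{R}$ (Theorem~\ref{thm:literature} applied to the number $G^{R}$). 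If F1 holds through a Right option $G^{LR}$ of $G^{L}$ with $G^{LR}\leqslant G^{R}$, then $G^{LR}\leqslant G^{R}\leqslant G^{L}$, contradicting $G^{LR}>G^{L}$ (Theorem~\ref{thm:literature} applied to $G^{L}$). If instead F2 holds, with witnesses $G^{RL}$ and $G^{LR}$ satisfying $G^{LR}\leqslant G^{RL}$, then $G^{RL}<G^{R}\leqslant G^{L}<G^{LR}$ gives $G^{RL}<G^{LR}$, again a contradiction. So the gap condition holds, $G$ is a number, and the induction closes.

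For the converse, assume every position of $S$ is a number. Then $G$, $G^{L}$, $G^{R}$ and all of their options are numbers, so Theorem~\ref{thm:literature} gives $G^{L}<G<G^{R}$, in particular $G^{L}<G^{R}$. A short case analysis on the canonical form of $G$ then locates the witness required by F1 or F2 among the options of $G^{L}$ and $G^{R}$ --- the option nearest the gap $(G^{L},G^{R})$ when $G^{L}$ or $G^{R}$ is an integer, and the canonical Right option of $G^{L}$ together with the canonical Left option of $G^{R}$ when $G$ is a non-integer dyadic rational --- using Theorem~\ref{thm:literature} once more to compare the actual options with their canonical counterparts. I expect the substantive part of the whole argument to be the single inductive step in the forward direction: the F1/F2 dichotomy is precisely the combinatorial hypothesis that forbids a Left option of $G$ from dominating a Right option, which is the only obstruction to invoking the Simplicity Theorem. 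The points needing care are that the induction be genuinely well founded --- automatic for any short ruleset, and transparently so for \name, where the associated binary number strictly decreases with every move --- and, in the converse, that positions need not be presented in canonical form, so every comparison between ``actual'' options must be routed through Theorem~\ref{thm:literature}.
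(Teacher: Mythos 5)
This statement is quoted from \cite{CarvalhoHNS} and the paper gives no proof of it, so there is nothing in-paper to compare against; your argument has to stand on its own. The forward implication --- the only direction this paper actually uses, in Theorem~\ref{thm:numbers2} --- you do prove correctly and completely. The induction on followers is well founded because $S$ is hereditarily closed and the games are short, the reduction to ``every $G^L<G^R$'' via the simplicity rule is the right move, and each of your three cases (F1 through a $G^{RL}$, F1 through a $G^{LR}$, F2) genuinely collapses to a contradiction with Theorem~\ref{thm:literature}, whose hypotheses are supplied by the induction hypothesis applied to the proper followers $G^L$, $G^R$ and their options.

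The converse, however, is only sketched, and the sketch as written would not go through. The witnesses required by F1/F2 must be options of the \emph{actual} forms of $G^L$ and $G^R$, and Theorem~\ref{thm:literature} bounds those actual options on the wrong side for your purposes: it gives $G^{LR}\geqslant(\text{canonical Right option of }G^L)$, which does not produce a $G^{LR}\leqslant G^R$; the ``option nearest the gap'' need not exist in a non-canonical form at all. The argument that does work is a birthday comparison. Suppose $G$ is a number and the pair $(G^L,G^R)$ (so $G^L<G^R$, both numbers with numeric options) fails F1: every Left option of $G^R$ is $<G^L$ and every Right option of $G^L$ is $>G^R$ (allowing either set to be empty). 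Then the open interval cut out by the options of $G^L$ and the one cut out by the options of $G^R$ each contain \emph{both} $G^L$ and $G^R$; by the simplicity rule $G^L$ is the unique simplest number in the first interval and $G^R$ the unique simplest in the second, so each has strictly smaller birthday than the other --- a contradiction. This shows F1 alone already holds for every pair of a number (F2 is never needed in this direction), and it also disposes of the degenerate cases where one of the witness sets is empty. If you only need what this paper needs, your forward argument suffices; for the full equivalence, replace the canonical-form sketch by the birthday argument.
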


\section{Main results}\label{sec: main results}
Before considering the values and associated strategies, we consider the outcomes; that is, we partially answer the question: ``Who wins the game?''
The full answer requires an analogous analysis to finding the values.

\begin{thm}\label{thm: left wins} Let $G=d_1d_2\ldots d_n$. If $d_1d_2\ldots d_n$ contains an even number of $1$s, or if $d_1=1$ and there are least two $1$s, then
Left wins $G$.
\end{thm}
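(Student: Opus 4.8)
The plan is to argue by strategy-stealing/pairing arguments that Left, moving first or second, always has a move that returns the game to a position in the same winning class, so the game eventually ends with Right unable to move. The two hypotheses (even number of $1$s, or leading $1$ with at least two $1$s) will be handled as two cases, though they overlap and feed into each other.

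First I would handle the case where $G$ has a nonzero even number of $1$s. The key observation, already noted in the introduction, is that Left always has a move here: pick any two $1$s and flip them to $0$s, which reduces the number of $1$s by exactly $2$, leaving an even number again. So it suffices to show that whenever Right moves from a position with an even (possibly zero) number of $1$s, Left can respond to restore a position with a nonzero even number of $1$s — or else Right has already lost. A Right move takes a $0$ at position $k$ and a $1$ at position $\ell>k$, flips both, and deletes everything to the right of the (old) topmost $1$ if $\ell$ was that topmost index. I would split on whether Right's chosen $1$ was the highest $1$ or not. If it was not the highest, the flip changes the count of $1$s by $\pm 1$ (one $1\to 0$ at $\ell$, one $0\to1$ at $k$) so parity flips to odd and at least one $1$ remains (the old highest is untouched); Left then has a legal two-$1$ flip and can restore evenness. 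If Right flips the highest $1$, deletions occur; I would track the parity of the number of $1$s among the deleted coins together with the flips at $k,\ell$, and show Left can still find two $1$s to flip unless the resulting position is terminal for Left, in which case I must check Right has in fact lost — i.e., the position is all $0$s or a single isolated $1$ configuration with no Left move, and argue this cannot arise as a Right-response to an even-count position. The bookkeeping here — exactly which parities can occur after a top-flip-with-deletion — is where the real work lies, and is the step I expect to be the main obstacle.

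Next I would treat the case $d_1=1$ with at least two $1$s, where the number of $1$s may be odd. Here the natural Left strategy is: if the number of $1$s is even, play as above; if it is odd (so there are at least three $1$s, since $d_1=1$ forces $\geq 1$ and oddness with $\geq 2$ gives $\geq 3$), Left flips the two highest $1$s, which removes two $1$s, keeps the count odd and at least $1$, and crucially preserves $d_1=1$ since $d_1$ is the lowest-index coin and is not among the two highest when there are $\geq 3$ ones. I need to check this is genuinely a legal Left move and that Right cannot escape: a Right move from a position with $d_1=1$ either leaves $d_1=1$ (if Right's flip does not touch position $1$, impossible to touch it as the $0$-coin since $d_1=1$, and touching it as the $1$-coin would need $d_1$ to be... actually Right picks $d_k=0$, so $k\neq 1$; Right could pick $\ell=1$ only if $d_1=1$ and $k<1$, impossible) — so in fact any Right move leaves coin $1$ untouched or deletes it, and I would show deletion of coin $1$ requires coin $1$ to be the topmost $1$, i.e., $G$ has exactly one $1$, contradicting our hypothesis. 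Hence after any Right move we still have $d_1=1$ (with possibly only one $1$ left, which would be a Left loss — so I must rule that out too, using that Right removed at most... a Right move changes the $1$-count by at most, combined with deletions, and I show it cannot drop from $\geq 2$ to exactly $1$ in a way Left cannot pre-empt).

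The overall structure is therefore an induction on the associated binary value $\sum d_i 2^{i-1}$, which strictly decreases on every move (as noted in the introduction, guaranteeing termination): assume the claim for all positions of smaller value, show Left has a response landing in one of the two winning classes at a strictly smaller value, and invoke induction. I would organize the write-up as: (i) Left always-has-a-move lemma for even-count positions; (ii) parity-tracking lemma for Right moves, including the deletion case; (iii) the leading-$1$ preservation lemma; (iv) assemble into the inductive argument. The delicate point throughout is the interaction of deletion with parity, so I would state a clean sub-lemma computing the parity of the $1$-count after an arbitrary Right move in terms of whether the topmost $1$ was selected, and lean on that uniformly.
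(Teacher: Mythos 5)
Your plan founders on a miscount at its core. You assert that when Right flips the pair $(d_k,d_\ell)$ from $(0,1)$ to $(1,0)$ the number of $1$s changes by $\pm 1$ and the parity flips to odd. It does not: one coin goes $1\to 0$ and one goes $0\to 1$, so the count is unchanged; moreover the coins deleted after the move are exactly those beyond the new topmost $1$, hence all $0$s, so deletion never removes a $1$ either. A Right move therefore preserves the number of $1$s \emph{exactly}. This single observation is essentially the whole proof of the even case: the count of $1$s is changed only by Left (who lowers it by $2$), so if it starts even it stays even, Left has a move whenever it is positive, Left is never the player who gets stuck, and it is Left who eventually reduces the position to all $0$s, leaving Right with no move. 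All of the parity-restoring responses, the ``bookkeeping of which parities can occur after a top-flip-with-deletion,'' and the worry that Right might drop the count from $\geq 2$ to $1$ are artifacts of the miscount; as written, your even case is not a proof but a list of subcases you acknowledge you have not resolved.

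Your second case has the right ingredients --- Right can never touch or delete $d_1$ (he needs $d_k=0$, so $k\neq 1$; taking $\ell=1$ would force $k<1$; and $d_1=1$ is never beyond the topmost $1$), and Left can avoid spending $d_1$ while at least three $1$s remain --- but you again leave the endgame unresolved (``I show it cannot drop from $\geq 2$ to exactly $1$\dots'') and you mislabel the terminal position: when the count reaches $1$ with the lone $1$ at $d_1$, that is a \emph{win} for Left, not a loss. Only Left's moves change the count, so the count becomes $1$ immediately after a Left move, and it is Right who then faces $10\cdots0$ with no legal move. Once you replace ``Right changes the parity'' with ``Right preserves the count,'' both cases collapse to the paper's short argument, and no induction on the binary value is needed beyond the observation that the game terminates.
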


\begin{proof} A Right move does not decrease the number of $1$s in the position. Thus, if in $G$, Left has a move, then she still has a move after any Right move in $G$. Consequently, regardless of $d_1$, if there are an even number of $1$s in $G$, it will be Left who reduces the game to all $0$s.  Similarly, if $d_1=1$ and there are an odd number of $1$s, Left will eventually reduce $G$ to a position with a single $1$; that is, to $d_1=1$ and $d_i=0$ for $i>1$. In this case, Right has no move and loses.
\end{proof}

The remaining case, $d_1=0$ and an odd number of $1$s, is more involved. The analysis of this case is the subject of the remainder of the paper. We first prove the following.

\begin{thm}\label{thm:numbers2} All \name\  positions are numbers.
\end{thm}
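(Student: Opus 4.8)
The plan is to apply Theorem~\ref{thm:numbers}: it suffices to show that every \name\ position $G=d_1d_2\ldots d_n$, together with all of its followers, satisfies either the F1 or the F2 property. Since the set of \name\ positions is clearly hereditary closed (every option of a $0$-$1$ string is again a $0$-$1$ string obtained by flipping, and possibly truncating, coins), I only need a single uniform verification that works for an arbitrary position $G$ with at least one Left option and one Right option (if either player has no move the property is vacuous, or rather Theorem~\ref{thm:numbers} only constrains pairs in $G^{\mathcal L}\times G^{\mathcal R}$, so positions where one player cannot move need no check). So fix $G$ with $d_i=d_j=1$ for some $i<j$, giving Left a move, and with $d_k=0,d_\ell=1$ for some $k<\ell$, giving Right a move.

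The key step is to make a \emph{good} choice of the pair $(G^L,G^R)$ for which to verify F1 or F2, rather than trying to handle all pairs. Here is the candidate. Let $j$ be the largest index with $d_j=1$. For Left, pick $G^L$ to be the move that flips the two rightmost $1$s: if $i<j$ are the two largest indices with $d_i=d_j=1$, then $G^L$ is $G$ with positions $i$ and $j$ zeroed. For Right, pick $G^R$ to be a move that zeros some $d_k=0$... — more precisely, Right picks $k<j$ with $d_k=0$ (such a $k$ exists unless the $1$s of $G$ form a prefix $11\cdots1$, a case I treat separately) and flips $d_k$ to $1$ and $d_j$ to $0$, deleting everything past position $j-1$; note this leaves a string whose rightmost $1$ is now at some position $\le j-1$. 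I then aim to verify the F1 property for this pair by exhibiting a Right option of $G^L$, i.e.\ some $G^{LR}$, and a Left option of $G^R$, i.e.\ some $G^{RL}$ — actually F1 asks for a single inequality $G^{RL}\ge G^L$ or $G^{LR}\le G^R$, so I only need one side. The idea is that from $G^R$ (where $d_k$ became $1$ and $d_j$ became $0$), Left can flip $d_k$ back to $0$ together with the next-lower $1$; comparing the resulting string to $G^L$ coin-by-coin should give $G^{RL}\ge G^L$, because both strings have the same high-order $1$s as $G$ below position $i$ and differ only in a controlled low-order block, and the associated binary-number ordering (or, better, a direct combinatorial comparison of the game values which we are in the middle of establishing — so this must be done purely in terms of option structure) favours $G^{RL}$.

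The main obstacle is precisely that last comparison: F1/F2 are statements about the \emph{values} $G^{LR}\le G^R$ or $G^{RL}\ge G^L$, and at this stage we do not yet know the values are numbers, so "$\ge$" is the partial order on games and must be argued by the second-player-wins criterion on a difference of \name\ positions — but differences of \name\ positions are not themselves \name\ positions, so one cannot recurse naively. The clean way around this is to choose the options so that $G^{RL}$ and $G^L$ are \emph{identical strings}, or $G^{LR}$ and $G^R$ are identical strings; then the required inequality is the trivial $G^L\ge G^L$. Concretely: after Left's best move $G^L$ (zeroing the two rightmost $1$s), Right's reply in $G^L$ that zeros the same $d_k$ as above and... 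I expect that a careful bookkeeping of which coins are flipped shows Right has a reply $G^{LR}$ in $G^L$ equal to the string Left reaches by replying $G^{RL}$ in $G^R$; equivalently, one exhibits a common follower reachable as $G^{LR}$ and as $G^{RL}$, which makes F2 hold with equality $G^{LR}=G^{RL}$. Verifying that such a "commuting" pair of moves always exists — handling the boundary cases (the $1$s of $G$ forming a prefix, fewer than two $0$s before the last $1$, exactly two $1$s total, the all-$1$s string, the all-$0$s string) separately — is the real content of the argument, and is where essentially all the case analysis will go; once that combinatorial lemma is in hand, Theorem~\ref{thm:numbers} delivers the conclusion immediately.
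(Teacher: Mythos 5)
There is a genuine gap, and it is located exactly where you made what you call the ``key step'': restricting attention to a single well-chosen pair $(G^L,G^R)$. The F1 and F2 properties are properties of a \emph{pair} of options, and Theorem~\ref{thm:numbers} needs them to hold for \emph{every} pair in $G^{\mathcal L}\times G^{\mathcal R}$ (this is what the paper's proof verifies, by taking $L$ and $R$ to be arbitrary moves). Checking one pair cannot suffice for any theorem of this shape: a position such as $\{3,0\mid 1\}$ has the pair $(0,1)$ satisfying F1, yet it equals the switch $\{3\mid 1\}$ and is not a number. So even if your bookkeeping for the best-move pair were completed, you could not invoke Theorem~\ref{thm:numbers}. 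A second, smaller issue is that the core of your argument is left as ``I expect that a careful bookkeeping\ldots shows'' rather than carried out; and the mechanism you propose for the generic case (a common follower giving $G^{LR}=G^{RL}$, i.e.\ F2) is actually unavailable for the very pair you chose, since your Left and Right moves share the rightmost $1$, so after Left moves Right's intended coin is gone --- that overlapping case must go through F1 via $G^{RL}=G^L$, not F2.

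The good news is that your closing idea --- exhibit a common follower by commuting the two moves --- is precisely the paper's argument, and it becomes \emph{easier}, not harder, once you apply it to an arbitrary pair: if the four indices involved in $L$ and $R$ are distinct, the two flips commute verbatim and $G^{LR}=G^{RL}$ gives F2 with no bookkeeping at all; if they overlap (necessarily in the $1$ that Right flips to $0$), then after Right's move two $1$s remain among the three relevant coins, Left flips both, and $G^{RL}$ is the same string as $G^L$, giving F1. No best-move analysis, truncation bookkeeping, or boundary cases (all-$1$s, prefix of $1$s, etc.) are needed for this theorem; those considerations belong to Theorem~\ref{thm:bestmoves}, which is proved later and separately.
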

\begin{proof}
Let $G$ be a \name \ position.
If only one player has a move, then the game is an integer. Otherwise, let $L$ be the Left move to change $(d_i,d_j)$ from $(1,1)$ to $(0,0)$. Let $R$ be the Right move to change $(d_k,d_\ell)$ from $(0,1)$ to $(1,0)$. No other digits are changed. If all four indices are distinct, then both $L$ and $R$ can be played in either order. In this case $G^{LR}=G^{RL}$. Thus, the F2 property holds. If there are only three distinct indices, then two of the bits are ones. If Left moves first, then $d_{i} = d_{j} = d_{k} = 0$. If Right moves first, then there are still two ones remaining after his move. After Left moves, the position becomes $G(0,0,0)$ and hence, $G^{L} = G^{RL}$. The F1 property holds.

There are no more cases since there must be at least three distinct indices. Since every position satisfies either the F1 or F2 property then, by Theorem~\ref{thm:numbers}, every position is a number.
\end{proof}

Given a position $G$, the following algorithm returns a value.\\

\noindent
\textit{Algorithm:}
Let $G$ be a \name\ position. Let $G_0=G$.
\begin{enumerate}
\item Set $i=0$.
\item Reductions: Let $\alpha$ and $\beta$ be strings of coins, and either can be empty.
     \begin{enumerate}
      \item If $G_0=\alpha01^{3+j}\beta$, $j\geq 1$, then set $G_0=\alpha101^{j}\beta$.
     \item If $G_0=\alpha01^{3}\beta$, and $\beta$ contains an even number of $1$s, then set  $G_0=\alpha10\beta$.

      \item Repeat until neither case applies, then go to Step \ref{item:next}.
     \end{enumerate}
\item\label{item:next} If $G_i$ is  $0^r1$, $r\geq 0$ or  $1^a0^{p_i}10^{q_i}1$, $a\geq 0$ and $p_i+q_i\geq 0$; then  go to Step \ref{item:last}.

Otherwise,
$G_i=\alpha01^a0^{p_i}10^{q_i}1$, $p_i+q_i\geq 1$, $a>0$ and some $\alpha$.
 Set
\begin{eqnarray*}
Q_i&=&0^{p_i}10^{q_i}1,\\
G_{i+1}&=&\alpha01^a.
\end{eqnarray*}
Go to Step \ref{step:iterate}.
\item\label{step:iterate} Set $i=i+1$. Go to Step \ref{item:next}.
  \item\label{item:last} If $G_i=0^r1$, then set $v_i=-r$. If $G_i=1^a0^{p_i}10^{q_i}1$, then set $v_i=\lfloor\frac{a}{2}\rfloor + \frac{1}{2^{2p_i+q_i}}$.
  Go to Step \ref{step:os}.
  \item\label{step:os}
 For $j$ from $i-1$ down to $0$, set $v_j=v_{j+1}:\frac{1}{2^{2p_j+q_j-1}}$.
  \item Return the number $v_0$.
\end{enumerate}

The algorithm implicitly returns two different results:
\begin{enumerate}
\item  For Step 3, the substrings, $Q_0, Q_1, \ldots, Q_{i-1}, G_i$, partition the reduced version of $G$;
\item The value $v_0$.
\end{enumerate}
First we illustrate the algorithm with the following example. Let $G=10011110110110111011110011$. The reductions, applied to the underlined digits, give that:

\begin{eqnarray*}
10011110110110111011110011 &=&10011110110110111\pos{0111}10011\\
                                                    &=&1001111011011\pos{0111}1010011\\
                                                    &=&1001111011\pos{0111}01010011\\
                                                    &=&1001111\pos{0111}001010011\\
                                                    &=&10\pos{0111}110001010011\\
                                                    &=&1010110001010011.
\end{eqnarray*}
Step 3 partitions the last expression into $101(011)(000101)(0011)$ so that the ordinal sum is given by
\begin{eqnarray*}
v_0&=&\left(\left(\frac{1}{2}:\frac{1}{2}\right):\frac{1}{64}\right):\frac{1}{8}\\
  &=&\frac{10257}{16348}.
\end{eqnarray*}
Now let $H=01001110110111011101$. The reductions give that:
\begin{eqnarray*}
01001110110111011101&=&0100111011\pos{0111}011101\\
                                      &=&0100111\pos{0111}0011101\\
                                      &=&010\pos{0111}100011101\\
                                      &=&01010100011101.
\end{eqnarray*}
The last expression partitions into $01(0101)(00011)(101)$ so that
\begin{eqnarray*}
v_0&=&\left(\left(-1:\frac{1}{4}\right):\frac{1}{32}\right):1\\
  &=&-\frac{893}{1024}.\\
\end{eqnarray*}

The next theorem is the main result of the paper.

\begin{thm}[Value Theorem]\label{thm: main theorem}
Let $G$ be a \name\ position. If $v_0$ is the value obtained by the algorithm applied to $G$, then $G = v_0$.
\end{thm}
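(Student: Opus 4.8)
The plan is to prove $G=v_0$ by strong induction on the integer $N(G)=\sum_i d_i2^{i-1}$ associated with $G$, using the fact---immediate from the explicit formulas---that every \name\ move and every reduction step of Step~2 strictly decreases $N$. The argument then splits along the three phases of the algorithm: the reductions of Step~2, the iterated ordinal-sum peeling of Steps~3 and~6, and the base-case evaluation of Step~5, in each case passing to a position of smaller $N$ and invoking the inductive hypothesis.

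First I would show that the two reduction rules preserve the game value, that is, $\alpha01^{3+j}\beta=\alpha101^{j}\beta$ for $j\geq1$, and $\alpha01^{3}\beta=\alpha10\beta$ whenever $\beta$ contains an even number of $1$s. Since both sides are numbers by Theorem~\ref{thm:numbers2}, it suffices to verify each identity as an equality of values; the cleanest route is to show the difference $\alpha01^{3+j}\beta-\alpha101^{j}\beta$ is a second-player win, pairing a move by either player inside $\alpha$, inside $\beta$, or a ``generic'' move within the long block of $1$s of one summand with the mirror move in the other, and checking by hand the bounded list of ``boundary'' positions---those in which a move touches the ends of the altered block---with the induction hypothesis used to evaluate the smaller games that arise. (Once Theorem~\ref{thm:bestmoves} is in hand one may instead induct by comparing the canonical Left and Right options of the two sides, since for numbers only the best move of each player matters by Theorem~\ref{thm:literature}.) For the second rule, the parity hypothesis on $\beta$ is precisely what forces Left to be the player who completes the residual play in $\beta$.

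Next, for a position already in the reduced form recognized by Step~3, I would show the iterated ordinal sum built in Steps~3 and~6 equals $G$. If $G=\alpha01^{a}0^{p}10^{q}1$ with $a>0$ and $p+q\geq1$, then Lemma~\ref{lem:osvalue} gives $G=G^L:H^R$, where $G^L=\alpha01^{a}$ is Left's best move (flipping the last two $1$s) and $H=0^{p}10^{q}1$ is the deleted suffix. Right's best reply in $H$ is $0^{p}10^{q-1}1$ when $q\geq1$, and a different move realizes the same value when $q=0$, so in all cases $H^R$ has value $\tfrac{1}{2^{2p+q-1}}$, exactly the exponent of Step~6; by the induction hypothesis applied to the smaller positions $G^L$ and $H^R$ their values are, respectively, the value the algorithm assigns to $\alpha01^{a}$ and $\tfrac{1}{2^{2p+q-1}}$, so that $G=G^L:H^R=v_1:\tfrac{1}{2^{2p+q-1}}=v_0$, provided one works throughout with canonical representatives so that the ordinal sum is well defined. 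Iterating strips off one block $Q_j=0^{p_j}10^{q_j}1$ at each stage, matching the partition recorded by Step~3, and the recursion halts at $G_i$. For the base cases of Step~5: if $G_i=0^{r}1$ the only legal moves are Right's, leading to $0^{k-1}1$ for $1\le k\le r$, the best being $0^{r-1}1=-(r-1)$ by induction, so $G_i=\{\,\mid-(r-1)\}=-r$; and if $G_i=1^{a}0^{p}10^{q}1$ one applies Lemma~\ref{lem:osvalue} (or computes options directly), peeling the leading block $1^{a}$---whose $1$s Left can remove only two at a time, which produces the term $\lfloor a/2\rfloor$---from the trailing $0^{p}10^{q}1$ of value $\tfrac{1}{2^{2p+q}}$, giving $v_i=\lfloor a/2\rfloor+\tfrac{1}{2^{2p+q}}$. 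Assembling the three phases closes the induction.

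I expect the principal obstacle to be the reduction step. The maps $\alpha01^{3+j}\beta\mapsto\alpha101^{j}\beta$ and $\alpha01^{3}\beta\mapsto\alpha10\beta$ change the length of the string and shift $\beta$, so there is no move-for-move bijection certifying equality; one must analyze the finitely many boundary interactions carefully, and for the second rule confirm that the parity condition on $\beta$ is exactly the threshold at which the collapse remains value-preserving. A secondary technical point is ensuring that the ordinal sums appearing in Steps~3--6 are formed from canonical representatives, so that Theorem~\ref{thm:oso} and Lemma~\ref{lem:osvalue} combine cleanly; beyond these, the remaining steps are routine induction together with direct applications of the ordinal-sum machinery of Section~\ref{sec:numbers}.
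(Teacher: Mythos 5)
Your proposal is correct and follows essentially the same route as the paper's proof: Step 2 is justified by the two reduction identities (the paper's Lemma~\ref{lem: reduction 1a} and Lemma~\ref{lem:reduction2}), the iterated peeling of blocks $0^{p}10^{q}1$ is exactly Lemma~\ref{lem:osvalue} with exponent $H^R=\tfrac{1}{2^{2p+q-1}}$, and the terminal forms are evaluated as in Lemma~\ref{lemm:value1}; your side computations (Right's best reply in $H$, the value $-r$ of $0^r1$, the value $\lfloor a/2\rfloor+\tfrac{1}{2^{2p+q}}$) all check out. The one point the paper makes that you pass over is the verification that after the Step~2 reductions no iterate $G_j$ can have the form $\beta 1^3$ (otherwise the Step~3 decomposition with $p_j+q_j\geq 1$ would not apply); this is a short argument using the parity case of Lemma~\ref{lem:reduction2} and is worth a sentence, but it does not affect the substance of your proof.
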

In the next section, we derive several results that will be used to prove Theorem~\ref{thm: main theorem}. The proof of Theorem~\ref{thm: main theorem} will appear in Section~\ref{sec: theorem proof}.

\section{Best moves and Reductions}\label{sec:proofs}
The proofs in this section use induction on the options. An alternate but equivalent approach is to regard the techniques as induction on the associated binary number of the positions. The proofs require detailed examination of the positions and we will use notation suitable to the case being considered. Often, a typical position will be written as a combination of generic strings
and the substring under consideration. For example, $111011000110101$ might be parsed as $(11101)(100011)(0101)$,
and written $\alpha100011\beta$ or more compactly as $\alpha10^31^2\beta$.\\

We require several results before being able to prove Theorem~\ref{thm: main theorem}. We begin by proving a simplifying reduction, followed by the best moves for each player, and then the remaining reductions used in the algorithm.

As an immediate consequence of Theorems~\ref{thm:numbers2} and~\ref{thm:literature} we have the following.

\begin{cor}\label{cor: greater than} Let $\alpha$, $\beta$, and $\gamma$ be arbitrary substrings of coins. We then have that $\alpha1\beta0\gamma > \alpha0\beta1\gamma$. Moreover, 
for an integer $r\geq 0$ we have that $\beta10^r1>\beta$.
\end{cor}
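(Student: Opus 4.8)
The plan is to derive both inequalities directly from Theorem~\ref{thm:numbers2} (all positions are numbers) combined with Theorem~\ref{thm:literature}, which tells us that a number strictly dominates each of its Right options and is strictly dominated by each of its Left options. The key observation is that the two positions $\alpha1\beta0\gamma$ and $\alpha0\beta1\gamma$ differ only by a single flip of a $(0,1)$ pair: the coin just after $\alpha$ goes from $1$ to $0$, and the coin just after $\alpha\beta$ goes from $0$ to $1$. Wait — I should be careful about the index ordering. Reading the move rules, a Right move takes a pair $d_k,d_\ell$ with $k<\ell$, $d_k=0$, $d_\ell=1$ and produces $d_k=1$, $d_\ell=0$; in $\alpha1\beta0\gamma$ the $1$ after $\alpha$ sits at a smaller index than the $0$ after $\alpha\beta$, so the relevant Right-move target is to flip those two into $\alpha0\beta1\gamma$ only if the pattern is $0\ldots1$ at the right indices — so in fact $\alpha0\beta1\gamma$ is the position, and Right's move from $\alpha1\beta0\gamma$ is... let me restate: in $\alpha 0\beta 1\gamma$, the $0$ immediately after $\alpha$ and the $1$ immediately after $\alpha\beta$ form a legal Right pair (the $0$ has the smaller index), and flipping them yields $\alpha 1\beta 0\gamma$. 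Hence $\alpha1\beta0\gamma$ is a Right option of $\alpha0\beta1\gamma$ (ignoring any deletions from $\gamma$, which do not occur here since the flipped $1$ is not the last $1$ when $\gamma$ is arbitrary — or one handles the deletion case separately, noting it only helps Right). Since $\alpha0\beta1\gamma$ is a number, Theorem~\ref{thm:literature} gives $\alpha0\beta1\gamma < \alpha1\beta0\gamma$, which is the first claim.

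For the second inequality, $\beta10^r1 > \beta$, the same idea applies: $\beta$ is obtained from $\beta10^r1$ by a Left move. Indeed, in $\beta10^r1$ the two $1$s (the one right after $\beta$ and the final one) form a legal Left pair, and flipping both to $0$ yields $\beta10^r1 \to \beta00^r0 = \beta0^{r+1}$. Hmm, that gives $\beta$ padded with trailing zeros, not $\beta$ itself. But trailing zeros are immaterial to the game value — a coin that is $0$ and has no $1$ to its right can never be involved in any move — so $\beta0^{r+1} = \beta$ as games. Thus $\beta$ (i.e. $\beta0^{r+1}$) is a Left option of $\beta10^r1$; since the latter is a number, Theorem~\ref{thm:literature} yields $\beta < \beta10^r1$, as desired. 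Equivalently one could invoke Theorem~\ref{thm:oso} once the ordinal-sum structure is known, but at this point in the paper the cleaner route is the direct Left-move argument.

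The main obstacle is bookkeeping around the deletion rule: when Right flips $d_\ell$ from $1$ to $0$ and $\ell$ was the index of the rightmost $1$, all coins past position $\ell$ are deleted. For the first inequality this means I should check that, when $\gamma$ contains no $1$, flipping the $1$ after $\alpha\beta$ to $0$ deletes $\gamma$ — but then the resulting position is $\alpha1\beta$ rather than $\alpha1\beta0\gamma$; since $\gamma$ is all zeros in that case, $\alpha1\beta = \alpha1\beta0\gamma$ as games, so the conclusion still holds. When $\gamma$ contains a $1$, no deletion occurs and the argument is exactly as above. So in every case $\alpha1\beta0\gamma$ (up to trailing zeros, which are free) is a genuine Right option of $\alpha0\beta1\gamma$, and similarly $\beta$ is a genuine Left option of $\beta10^r1$. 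Once those case distinctions are dispatched, both strict inequalities follow immediately from Theorems~\ref{thm:numbers2} and~\ref{thm:literature}.
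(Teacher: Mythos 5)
Your proposal is correct and follows the same route as the paper: exhibit $\alpha1\beta0\gamma$ as a Right option of $\alpha0\beta1\gamma$ and $\beta$ as a Left option of $\beta10^r1$, then invoke Theorem~\ref{thm:numbers2} together with Theorem~\ref{thm:literature}. Your extra bookkeeping about trailing zeros and the deletion rule is harmless and slightly more careful than the paper's own two-line argument.
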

\begin{proof}
Recall that by Theorem~\ref{thm:numbers2} all \name\ positions are numbers. Thus, Theorem~\ref{thm:literature} applies.

A Right option of $\alpha0\beta1\gamma$ is $\alpha1\beta0\gamma$ and so we have that $\alpha1\beta0\gamma > \alpha0\beta1\gamma$. Similarly, a Left option of $\beta 10^r1$ is $\beta$ we have that $\beta 10^r1>\beta$. \end{proof}

Next we prove the best moves for each player. Right wants to play the zero furthest to the right and the 1 adjacent to it. Left wants to play the two ones furthest to the right.

\begin{thm}\label{thm:bestmoves}
Let $G$ be a \name\ position, where in $G$, $k$ and $n$ are the greatest indices such that the associated coin is $1$. Let $i$ be the greatest index such that $d_i=0$. Left's best move is to play $(d_{k}, d_{n})$, and Right's best move is to play $(d_{i}, d_{i+1})$.
\end{thm}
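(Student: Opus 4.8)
The plan is to prove both claims by comparing the candidate best move against an arbitrary alternative move, using the fact (Theorem~\ref{thm:numbers2}) that every \name\ position is a number, so that Theorem~\ref{thm:literature} applies and it suffices to show that the candidate move is dominant. Concretely, for Left I must show that the option $G^L$ obtained by flipping $(d_k,d_n)$ satisfies $G^L \geq G'$ for every other Left option $G'$, and for Right that the option $G^R$ obtained by flipping $(d_i,d_{i+1})$ satisfies $G^R \leq G''$ for every other Right option $G''$. By Corollary~\ref{cor: greater than} (and the remark that follows it), inequalities of the form $\alpha 1\beta 0\gamma > \alpha 0\beta 1\gamma$ and $\beta 10^r1 > \beta$ are available for free; the task is to chain such comparisons, possibly via intermediate positions, to get from one option to the other.

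For \textbf{Right's best move}, suppose Right instead flips $(d_k,d_\ell)$ with $d_k=0$, $d_\ell=1$, some other valid pair. I would split into cases according to whether $\ell=n$ (Right's move involves the last $1$, so the tail past it is deleted) or $\ell<n$. When $\ell < n$ the resulting position still has $d_n=1$ as its last $1$, and I compare it with $G^R$, which is $\alpha' 1 0 \cdots$ with the last two relevant coins being the rightmost $0$ turned to $1$ and the coin to its right ($d_{i+1}$) turned from $1$ to $0$ — note $d_{i+1}$ is indeed $1$ since $i$ is the last $0$, so everything right of position $i$ is a $1$, in particular $d_{i+1}=1$ and $i+1\le n$. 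The key structural observation is that $G^R = \alpha 1 0 1^{s} $ where $1^{s}$ is the block of $1$s past the chosen $0$, minus the one that got flipped; moving the flipped $0$ as far right as possible makes the "$1$" appear as far right as possible, which by repeated application of $\alpha 1\beta 0 \gamma$ vs.\ $\alpha 0 \beta 1 \gamma$ yields the smallest value. The case $\ell=n$ needs separate care because of the deletion of the tail, and I expect to handle it by comparing with a non-deleting move first and then invoking $\beta 10^r1>\beta$ to absorb the deleted suffix.

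For \textbf{Left's best move}, an arbitrary Left option flips $(d_k,d_j)$ with $d_k=d_j=1$, $k<j$, turning both to $0$. I want $G^L$ (flipping $(d_\text{2nd-last }1,\ d_\text{last }1) = (d_k,d_n)$ in the theorem's notation) to dominate. Again I'd move one endpoint at a time: first compare $G'$ with the option that flips $(d_k, d_n)$ — i.e.\ keep the left chosen $1$ fixed but move the right chosen $1$ rightward to the last $1$; this replaces a $\ldots 1 \ldots 0 \ldots 1\ldots$ pattern (with the trailing $1$ at position $n$ now $0$) by one where the $1$ stays further right, which should be an increase by Corollary~\ref{cor: greater than}. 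Then compare with $G^L$ itself by sliding the left endpoint. The subtlety is that flipping two $1$s is not literally an instance of the single-bit swap in the corollary, so I'd route through an intermediate position (e.g.\ pass through a position obtained by a Right-type single swap, or telescope through several corollary inequalities) to legitimately chain the comparisons.

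The \textbf{main obstacle} I anticipate is the bookkeeping around the two special features of the ruleset: (i) Right's move deletes everything after the last $1$ when his chosen $1$ is the last one, so the two positions being compared need not even have the same length, and the clean swap-inequality of Corollary~\ref{cor: greater than} does not directly apply across a deletion; and (ii) a Left move changes two bits at once, so it is not a single application of the corollary. Both are handled by introducing suitable intermediate positions and using induction on the associated binary number (equivalently, induction on options), but getting the case analysis exhaustive and the intermediate positions legal moves is where the real work lies.
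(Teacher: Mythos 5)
Your high-level framework is the right one and matches the paper's: since all positions are numbers (Theorem~\ref{thm:numbers2}), Theorem~\ref{thm:literature} reduces the problem to showing the candidate option dominates every alternative, and Corollary~\ref{cor: greater than} plus induction on options supplies the comparisons. Your treatment of \textbf{Left's move} is essentially the paper's proof: when all four indices are distinct one routes through the intermediate position $G(1,0,0,1)$ and applies the corollary twice, $G(1,1,0,0)>G(1,0,0,1)>G(0,0,1,1)$, and the three-index degenerate cases are single applications. That half is fine.

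The gap is in the \textbf{Right case}, which is where almost all of the work in this theorem lives. Your proposed mechanism --- slide the flipped pair rightward and conclude by ``repeated application'' of the swap inequality --- does not go through, because two Right options generally differ in four coordinates and the corollary only ever produces positions \emph{larger} than a common intermediate, which gives no comparison between the two options themselves. Concretely, in $G=01011$ the options $01101$ (flip $(d_3,d_4)$, the claimed best move) and $10011$ (flip $(d_1,d_2)$) both satisfy $01101>00111$ and $10011>00111$ by the corollary, and no chain of swap inequalities orders them; yet $01101=-\tfrac12<\tfrac1{16}=10011$, so the inequality you need is true but not reachable this way. The paper instead proves $G(1,0,0,1)-G(0,1,1,0)\geqslant 0$ by exhibiting a second-player (Left) winning strategy in the difference game, split into roughly eight cases according to the relative positions of the indices and whether $n=k+1$, $n=k+2$, or $n>k+2$, with induction on options used to identify the opponents' best replies inside each summand, and with the tail-deletion cases absorbed by explicit responses rather than by $\beta10^r1>\beta$ alone. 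You correctly name the two obstacles (deletion of the suffix, and the fact that a move changes two bits), but naming them is not the same as resolving them: as written, the Right half of your argument is a plan whose central step fails, and the actual proof requires the full difference-game case analysis.
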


\begin{proof}
We prove this theorem by induction on the options. Note that we use the equivalent binary representation of the game position. If there are three or fewer coins, then by exhaustive analysis the theorem is true.

Let $G$ be $d_1d_2\ldots d_n$. We begin by proving Left's best moves.  Let $k$ and $n$ be the two largest indices, where $d_{k}=d_{n}=1$, and let $i$ and $j$, $i<j$, be two indices with $d_{i}=d_j=1$. We use the notation $G(d_i,d_j,d_{k},d_n)$ to highlight the salient coins. The claimed best Left move is from $G(1,1,1,1)$ to $G(1,1,0,0)$. This must be compared to any other Left move, represented by moving from  $G(1,1,1,1)$ to $G(0,0,1,1)$. That is, we need to show that
$G(1,1,0,0)-G(0,0,1,1)\geqslant 0$.
For the moves to be different, at least three of $i,j,k,n$ are distinct. Further, by the choice of the indices, we now know that $i<k$.

We first assume the four indices are distinct. In this case, we have that $i<j<k<n$.
By applying Corollary \ref{cor: greater than} twice,
we have that \[G(1,1,0,0)> G(1,0,0,1)>G(0,0,1,1).\]

We may assume then, without loss of generality, that $j=k$ or $j=n$. Now consider $G(d_i,d_{k},d_n)=G(1,1,1)$.
By Corollary \ref{cor: greater than}, we have that if $j=k$, $G(1,0,0)> G(0,0,1)$, and  if $j=n$, $G(1,0,0)> G(0,1,0)$.

We now prove Right's best move.  There are more cases to consider. Let $k$ be the largest index such that $d_k=0$ and $d_{k+1}=1$ and
also let $i,j$, $i<j$ be indices with $d_i=0$ and $d_j=1$. The claimed best move is $d_k,d_{k+1}$ and this must be compared to the arbitrary Right move
$d_i,d_j$. The original position is $$G(d_i,d_j,d_k,d_{k+1}) = G(0,1,0,1)$$ and we need to show that
$D=G(1,0,0,1)-G(0,1,1,0)\geqslant 0$. For the moves to be different, there must be at least three distinct indices.

Suppose Right plays in the first summand of $D$.
Note that, by induction, the best moves of Left and Right are known.\\

\noindent
(1) First, suppose $j<k$. By induction, Right's best move in the first summand of $D$, is to $D'=G(1,0,1,0) - G(0,1,1,0)$.
Since $i<j$, then $G(1,0,1,0)$ is a Right option of  $G(0,1,1,0)$ and thus, $D'$ is non-negative by Corollary \ref{cor: greater than}. \\

\noindent
(2)
If $j=k+1$, then there are only three distinct indices. The original game is $G(d_i,d_k,d_{k+1})=G(0,0,1)$ and $D=G(1,0,0)-G(0,1,0)$. Since  $G(1,0,0)$ is a Right option of $G(0,1,0)$, then $D$ is non-negative by Corollary \ref{cor: greater than}. \\

\noindent
(3)
If $j>k+1$, then the original game, with the indices in increasing order, is $G(d_i,d_k,d_{k+1},d_j) = G(0,0,1,1)$ and $D=G(1,0,1,0)-G(0,1,0,1)$. Right's best move is to $G(1,1,0,0)-G(0,1,0,1)$ and Left responds to $G(1,1,0,0)-G(1,1,0,0) = 0$, and Left wins. \\

In all cases, if Right moves in the first summand of $D$, then Left wins.  Next, we consider Right moving in the second summand of $D=G(1,0,0,1)-G(0,1,1,0)$. Note that by the choices of the subscripts, $d_\ell=1$ if
$n\geqslant \ell\geqslant k+1$.\\

\noindent
(1) If $n>k+2$, then Right's best move in the second summand is to change $d_{n-1},d_n$  from $(1,1)$ to $(0,0)$. Left copies this move in the first summand and the resulting difference game is non-negative by induction.\\

\noindent
(2) Suppose $n=k+2$.\\

\noindent
(2i)
If  $j< k+1$, then $G(d_i,d_j,d_k,d_{k+1},d_{k+2})=G(0,1,0,1,1)$ and  $D=G(1,0,0,1,1)-G(0,1,1,0,1)$. Right's best move is to $G(1,0,0,1,1)-G(0,1,0,0,0)$. Left moves to $G(1,0,0,0,0)-G(0,1,0,0,0)$. This is non-negative by Corollary \ref{cor: greater than} and Left wins.\\

For the next two sub-cases, exactly two $1$s will occupy two of the four indexed positions. Since Right is moving in the second summand, he is changing two $1$s to two $0$s. Thus, Left's best response for each case is to move in the first summand, bringing the game to $G(0,0,0,0)-G(0,0,0,0)=0$, and she wins. For these cases, we only list the original position. The strategy for both cases is as just described.  \\

\noindent
(2ii) If  $j= k+1$, then $G(d_i,d_k,d_{k+1},d_{k+2})=G(0,0,1,1)$ and  $D=G(1,0,0,1)-G(0,1,0,1)$. \\

\noindent
(2iii) If  $j= k+2$, then $G(d_i,d_k,d_{k+1},d_{k+2})=G(0,0,1,1)$ and  $D=G(1,0,1,0)-G(0,1,0,1)$.\\

\noindent
(3) Now suppose $n=k+1$. \\

\noindent
(3i) If  $j< k+1$, then let $ \ell<k+1$ be the largest index such that $d_\ell=1$. \\

If $j<\ell$, then we have $G(d_i,d_j,d_\ell,d_k,d_{k+1})=G(0,1,1,0,1)$ and  $D=G(1,0,1,0,1)-G(0,1,1,1,0)$.
Right's best move is to $G(1,0,1,0,1)-G(0,1,0,0,0)$. Left moves to $G(1,0,0,0,0)-G(0,1,0,0,0)$ which is non-negative since
$G(1,0,0,0,0)$ is a Right option of $G(0,1,0,0,0)$.

If $j=\ell$, then $G(d_i,d_j,d_k,d_{k+1})=G(0,1,0,1)$ and  $D=G(1,0,0,1)-G(0,1,1,0)$.
Right's best move is to $G(1,0,0,1)-G(0,0,0,0)$. Left moves to $G(0,0,0,0)-G(0,0,0,0)=0$, and Left wins.\\

\noindent
(3ii) If  $j= k+1$, then $G(d_i,d_k,d_{k+1})=G(0,0,1)$ and $D=G(1,0,0)-G(0,1,0)$. This is non-negative by Corollary \ref{cor: greater than}. \\

In all cases, Left wins $D$, proving the result.
 \end{proof}

Suppose in a position that the coins of the best Right move are different from those of the best Left move. The next lemma essentially says that the position before and after one move by each player are equal. It is phrased in a way that is useful for reducing the length of the position. For the Algorithm, it suffices to prove the result for $\beta$ being empty. However, it is useful, certainly for a human, to reduce the length of the position as much as possible.

\begin{lemm}\label{lem: reduction 1a}
If $\alpha \geq 0$, then we have that $\alpha01111^a = \alpha101^a$.
\end{lemm}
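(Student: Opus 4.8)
The plan is to prove $\alpha 0 1^{4+a} - \alpha 1 0 1^a \geq 0$ and its reverse, so the two positions are equal as numbers (recall by Theorem~\ref{thm:numbers2} both sides are numbers, so it suffices to show each difference is a second-player win). By symmetry it is enough to show Left wins $D = (\alpha 0 1^{4+a}) - (\alpha 1 0 1^a)$ moving second; the analogous argument handles the negative. I would proceed by induction on the length of $\alpha$ (equivalently, on the associated binary numbers), using Theorem~\ref{thm:bestmoves} to restrict attention to the best moves of each player.

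First I would dispose of moves in the component $\alpha 0 1^{4+a}$ by Right and in $\alpha 1 0 1^a$ by Left. By Theorem~\ref{thm:bestmoves}, Right's best move in $\alpha 0 1^{4+a}$ is to flip the rightmost $0$ (which sits just before the block $1^{4+a}$, assuming $\alpha$ ends in a $1$ or is such that this is the last zero) together with the $1$ next to it, yielding $\alpha 1 0 1^{3+a}$ — and Left answers in the other component by her best move, flipping the last two $1$s of $\alpha 1 0 1^a$, reaching $\alpha 1 0 1^{3+a} - \alpha 1 0 1^{a-2}$ (for $a \geq 2$), which is $\geq 0$ by the induction hypothesis applied with exponent $a-2$; the small cases $a=0,1$ I would check directly, and if the relevant zero is actually inside $\alpha$ the move lands on a strictly shorter instance covered by induction. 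Dually, Left's best move in $\alpha 0 1^{4+a}$ flips the last two $1$s to reach $\alpha 0 1^{2+a}$, and Right answers by his best move in $-(\alpha 1 0 1^a)$, i.e. flipping the $0$ and the first $1$ of $1^a$, giving $\alpha 0 1^{2+a} - \alpha 1 0 1^{a-1}$ — again $\geq 0$ by induction (with the base case $a=0$, where Right's move in $-(\alpha 1 0)$ is into $\alpha$ itself, handled separately). Moves by either player inside the common prefix $\alpha$ cancel: a move of $\alpha$ to $\alpha'$ on one side is matched by the same move on the other, landing in $(\alpha' 0 1^{4+a}) - (\alpha' 1 0 1^a)$, which is $\geq 0$ by the induction hypothesis on shorter prefixes. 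The remaining responses (Right flipping $1$s of the tail block in the first component, Left having no move in $-(\alpha 1 0 1^a)$ when $a$ is small) are mirrored in the standard way.

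The main obstacle I anticipate is the bookkeeping around \emph{where the rightmost zero actually lies}: the clean description "$\alpha$ ends in a $1$" need not hold — $\alpha$ may end in one or more $0$s, in which case Right's best move in $\alpha 0 1^{4+a}$ is determined by a zero strictly inside $\alpha$ (or inside the trailing zeros of $\alpha 0$), and likewise Right's best move in $-(\alpha 1 0 1^a)$ when $a=0$ reaches into $\alpha$. These boundary configurations are exactly what forces the hypothesis $\alpha \geq 0$: I would use Corollary~\ref{cor: greater than} and Theorem~\ref{thm:oso} to show that such "deep" Right moves only make the first component larger relative to the second, so Left still wins. Concretely, after Right's deep move the first summand becomes some $\alpha'' 0 1^{4+a}$ with $\alpha'' \geq \alpha$ up to a single $1\!\to\!0$ flip whose net effect, by Corollary~\ref{cor: greater than}, is a decrease bounded by the corresponding change in $-(\alpha 1 0 1^a)$; pinning down that this domination is genuine, rather than merely plausible, is the delicate point, and it is where the case analysis will be densest. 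Once that is in hand, assembling the cases into "Left wins $D$ moving second" is routine, and the reverse inequality $\alpha 1 0 1^a - \alpha 0 1^{4+a} \geq 0$ follows by the symmetric argument, giving $\alpha 0 1^{4+a} = \alpha 1 0 1^a$.
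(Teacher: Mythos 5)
Your high-level plan (show the difference is a second-player win, induct, and use Theorems~\ref{thm:numbers2}, \ref{thm:literature}, and~\ref{thm:bestmoves} to restrict attention to best moves) is the same as the paper's, but the execution has genuine errors. First, you have misread the position: $\alpha01111^a$ is $\alpha\,0\,1^{3+a}$, not $\alpha\,0\,1^{4+a}$ --- the paper's own case $a=1$ reads $H=\alpha01111-\alpha101$. This is not cosmetic: $\alpha01^{4+a}$ and $\alpha101^a$ have different parities of $1$s, and the identity you set out to prove is false; for instance with $\alpha=11\geq 0$ and $a=0$ one gets $1101111=\tfrac32$ while $1110=1$. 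Second, and more seriously, you repeatedly assign moves in the negated summand to the wrong player. In $D=X-Y$ with $Y=\alpha101^a$, Left's moves in the component $-Y$ are the negatives of \emph{Right's} moves in $Y$; so Left cannot ``answer by flipping the last two $1$s of $\alpha101^a$'' (that is a Right move of $D$), and Right cannot ``answer by flipping the $0$ and the first $1$ of $1^a$'' (that is a Left move of $D$). Both of your designated replies are illegal for the player you give them to. The actual winning continuations are different: after Right plays $X\to\alpha101^{2+a}$, the position $\alpha101^{2+a}-\alpha101^a$ is already positive by Corollary~\ref{cor: greater than}, and after Left plays $X\to\alpha01^{1+a}$, Right replies \emph{in the first summand} to $\alpha101^{a}$, producing an exact copy of $Y$ and hence $0$ (cf.\ the paper's case (2)i). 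Third, your inductive appeals do not match the induction hypothesis: $\alpha101^{3+a}-\alpha101^{a-2}$ is not of the form $\alpha'01^{3+b}-\alpha'101^{b}$, so ``$\geq0$ by the induction hypothesis with exponent $a-2$'' does not follow; what is true there has to come from iterating Corollary~\ref{cor: greater than}.

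Finally, you locate the difficulty in the wrong place. In $\alpha01^{k}$ with $k\geq1$ every coin after the displayed $0$ is a $1$, so that $0$ is always the rightmost $0$ and Right's best move in the first summand never reaches into $\alpha$; there is no ``deep move'' problem there. The delicate cases are instead the small exponents $a\in\{0,1\}$, where the second summand degenerates to $\alpha101$ or $\alpha1$ and a first move in it does reach into $\alpha$; the paper resolves these by writing $\alpha=\beta01^{b}$ or $\alpha=\beta10^{c}$ and exhibiting explicit replies (its cases (2)ii--iv and (3)ii--iii). You defer exactly these cases (``the delicate point \ldots\ is where the case analysis will be densest''), so the content that would constitute the proof is absent. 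As written, the proposal is not a correct proof.
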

\begin{proof}
Let $H=\alpha01111^a - \alpha101^a$. We need to show $H=0$. We have several cases to consider. \\

\noindent
(1) If $a \geq 2$, then playing the same move in the other summand are good responses. After two such moves we have either
\[\alpha01111^{a-2} - \alpha101^{a-2} = 0, \quad \text{by induction,}\]
or
\[\alpha 10111^a - \alpha1101^{a-1} = \alpha1101^{a-1}-\alpha1101^{a-1} = 0, \quad \text{by induction}.\]

\noindent
(2) If $a=1$, then $H=\alpha01111-\alpha101$. The cases are:
\begin{enumerate}
\item[i.] Left plays in the first summand to $\alpha011 - \alpha101,$ then Right moves to $\alpha 101 - \alpha101=0$.
\item[ii.] Right plays in the second summand to $\alpha 01111-\alpha,$ then Left moves to $\alpha 011-\alpha$.  Since $(\alpha011)^L = \alpha$, then $\alpha011 > \alpha$.
\item[iii.] Right plays in the first summand to $\alpha10111 - \alpha101$, then Left responds to $\alpha101 - \alpha101=0$.
\item[iv.] Left plays in the second summand to  $\alpha01111-\alpha11$, then Right moves to $\alpha10111-\alpha11 = \alpha11-\alpha11 = 0$, by induction.
\end{enumerate}

\noindent
(3) If $a = 0$, then $H=\alpha0111-\alpha1$. There are several cases to consider.
\begin{enumerate}
\item[i.] If Left or Right play in the first summand, then the response is in the first summand giving $\alpha1-\alpha1=0$.
\item[ii.] If Left plays in the second summand, then since there is a Left move, then $\alpha = \beta 01^b$, $b\geq 0$. We have that $\beta01^b0111 -\beta01^b1$ and Left moves to $\beta01^b01^3 -\beta101^b$. Here, Right responds to $\beta101^{b-1}01^3 -\beta101^b$, which by induction is equal to
$ \beta101^{b-1}1 -\beta101^b =0$.
\item [iii.] Right plays in the second summand. For a Right move to exist, then $\alpha = \beta10^a$, $a\geq 0$. Thus,  
$H=\beta10^a0111-\beta10^a1$, and Right moves to  $\beta10^a0111-\beta$. Left responds by moving to $\beta00^a011-\beta$. We then have that $(\beta00^a011)^L = \beta$; thus, $\beta00^a011>\beta$. Hence, we find that $\beta00^a011-\beta>0$.
\end{enumerate}

In all cases, the second player wins $H$ thereby proving the result.
\end{proof}

There are reductions that can be applied to the middle of the position, but extra conditions are needed.
\begin{lemm}\label{lem:reduction2}
Let $\alpha$ and $\beta$ be arbitrary binary strings where either (a) $\beta$ starts with a $1$, or (b) $\beta$ starts with $0$ and has an even number of $1$s. We then have that
\[\alpha0111\beta=\alpha10\beta.\]
\end{lemm}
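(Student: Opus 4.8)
The goal is to show $\alpha0111\beta = \alpha10\beta$ in both cases (a) and (b). Following the style of Lemma~\ref{lem: reduction 1a}, I would prove this by showing the difference $H = \alpha0111\beta - \alpha10\beta$ is a second-player win, using induction on the options (equivalently, on the associated binary number). The key structural fact is that both positions $\alpha0111\beta$ and $\alpha10\beta$ have the \emph{same suffix} $\beta$, so the interesting action is confined to the ``$0111$ vs.\ $10$'' discrepancy and to how moves interact with $\alpha$; moves that take place entirely inside $\beta$ in one summand can simply be mirrored in the other summand, leaving a difference of the same form with a shorter $\beta$, which vanishes by induction.

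\textbf{Key steps.} First I would dispose of the ``mirrored in $\beta$'' moves: if either player moves using two coins both lying in the $\beta$-part of one summand, the opponent copies that move in the $\beta$-part of the other summand, reducing to $\alpha0111\beta' - \alpha10\beta'$ (or a position with all-zero $\beta$-tails), which is $0$ by induction once one checks the hypotheses (a)/(b) are preserved or the residual position is handled by the base case. Here I must be careful that the parity/leading-digit condition on $\beta$ is exactly what guarantees such a mirror move exists for the relevant player and that Right's deletion rule (coins past the last $1$ get deleted) behaves symmetrically in the two summands --- this is where condition (b)'s ``even number of $1$s'' and condition (a)'s ``starts with $1$'' do their work. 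Second, I would handle moves that straddle the discrepancy region: a Left move turning two $1$s in the ``$0111$'' block (there are three $1$s there) to $0$s, and a Right move flipping a $0,1$ pair where the $0$ is the leading $0$ of ``$0111$'' (or, in the $\alpha10\beta$ summand, the $0$ after the $1$). In each case I would identify the explicit winning response, typically landing on a position of the form (shorter discrepancy, same $\beta$) that is $0$ by induction, or invoking Corollary~\ref{cor: greater than} directly when the response yields a position comparing a Right-option to its parent. Third, I would handle moves that use one coin in $\alpha$ and one coin in the discrepancy/$\beta$ region; as in Lemma~\ref{lem: reduction 1a}, existence of such a move forces $\alpha$ to have a specific form ($\alpha = \gamma01^b$ for a Left move, $\alpha = \gamma10^c$ for a Right move), and then the response and an induction step finish it, possibly using $\alpha \geq 0$ to control signs.

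\textbf{Main obstacle.} The hardest part will be the bookkeeping around Right's moves near the boundary between ``$0111$'' and $\beta$, because Right's move can flip the last $1$ of the $0111$ block together with the first $1$ of $\beta$ --- no wait, Right flips a $(0,1)$ pair, so the subtle case is when Right picks the $0$ at the start of ``$0111$'' (index of the leading $0$) paired with a $1$ far to the right inside $\beta$; then in the ``$0111$'' summand the three $1$s sit between, and comparing this to the matched move available in the $\alpha10\beta$ summand is delicate. The cleanest route is probably to first prove the $\beta$-empty case (which is all the Algorithm needs, and which reduces to checking $\alpha0111 = \alpha10$ --- note $\alpha10$ has associated number smaller, and one verifies via options using $\alpha\geq0$ and Corollary~\ref{cor: greater than}), and then bootstrap to general $\beta$ by the mirroring argument, invoking the $\beta$-empty case plus induction on $|\beta|$. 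I expect cases (a) and (b) to differ only in which player is guaranteed the mirror move in $\beta$, so the two cases should share almost all of the argument.
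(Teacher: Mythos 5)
Your overall shape is right --- set $H=\alpha0111\beta-\alpha10\beta$, mirror moves made inside $\beta$, and induct --- and that mirroring step is essentially all the paper's proof consists of. The gap is that you plan to answer \emph{every} move, and you correctly identify the straddling moves and the moves touching $\alpha$ as the hard part, but you leave them at ``I would identify the explicit winning response.'' The paper never has to face those cases: since every position is a number (Theorem~\ref{thm:numbers2}), Theorem~\ref{thm:literature} says only each player's \emph{best} move needs a reply, and Theorem~\ref{thm:bestmoves} says Left's best move is the two rightmost $1$s while Right's is the rightmost $0\cdots1$ pair. After disposing of $\beta$ empty or $\beta=1^a$ via Lemma~\ref{lem: reduction 1a}, hypotheses (a)/(b) force both best moves to lie entirely inside $\beta$ in both summands, so mirroring plus induction on options finishes immediately; no straddling or $\alpha$-cases ever arise. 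Without invoking this domination argument your plan commits you to a large explicit case analysis that you have not carried out, and that is where the real work of the lemma would be.

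You also misattribute the role of the hypotheses. They are not about Right's deletion rule behaving symmetrically; they are precisely what guarantees that Left's best move (the two rightmost $1$s of the whole string) does not straddle the $0111$ versus $10$ discrepancy. The paper's own counterexample $011101=3/8\neq 1/4=1001$ (here $\beta=01$ has an odd number of $1$s) fails exactly because Left's best move in the first summand pairs a $1$ of the $0111$ block with the lone $1$ of $\beta$, while in the second summand it pairs the $1$ of $10$ with that $1$, and these do not mirror. Finally, your proposed bootstrap --- prove the $\beta$-empty case first, then induct on $|\beta|$ --- adds little: the $\beta$-empty case is already Lemma~\ref{lem: reduction 1a}, and the induction that actually closes the argument is on options (equivalently on the associated binary number), not on the length of $\beta$.
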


\begin{proof}
Let $H=\alpha0111\beta - \alpha10\beta$. We need to show that $H=0$. We have several cases to consider.

\begin{enumerate}
\item If $\beta$ is empty or $\beta =1^a$, then $H=0$ by Lemma~\ref{lem: reduction 1a}. Therefore, we may assume that $\beta$ has at least one $1$ and one $0$.

\item If $\beta = 1\gamma1$ ($\beta$ must end in a $1$),
then the best moves, in both summands, are pairs of coins in $\beta$ and $-\beta$. If each player
 copies the opponent's move in the other summand, then this leads to
\[\alpha0111\beta - \alpha10\beta \rightarrow \alpha0111\beta' - \alpha10\beta'\]
and the latter expression is equal to $0$, by induction.

\item If $\beta \neq 1\gamma1$, then $\beta = 0\gamma1$ and $\gamma1$ has at least two $1$'s. The best moves are in $\beta$ and $-\beta$ and are the best responses to each other. We then derive that
\[\alpha0111\beta-\alpha10\beta \rightarrow \alpha0111\beta'-\alpha10\beta' = 0, \quad \text{by induction.}  \]
\end{enumerate}
In all cases $H=0$, and this concludes the proof.
\end{proof}

In Lemma~\ref{lem:reduction2}, the conditions are necessary. An example is: $$3/8=011101\ne 1001=1/4.$$ Here, $\beta$ starts with a $0$ and has an odd number of $1$s.

These reduction lemmas are important in evaluating a position. The reduced positions will end in $011$ or $01$.
By considering the exact end of the string, specifically, if there are at least two 0s (in one special case three 0s), then we can find an ordinal sum decomposition.  The decomposition is determined by where the third topmost $1$ is situated.

The next result is the start of the ordinal sum decomposition of a position. The exponent is the value of the Right option of the substring being removed.
\begin{lemm}\label{lem:osvalue}
If $a \geq 1$ and $p$ and $q$ are non-negative integers such that $p+q \geq 1$,  then \[\alpha 01^a0^p10^q1=\alpha01^a:\frac{1}{2^{2p+q-1}}.\]
\end{lemm}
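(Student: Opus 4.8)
The plan is to prove the identity by induction on the associated binary number (equivalently, induction on options), using Theorem~\ref{thm:oso} to control the ordinal sum and Theorem~\ref{thm:bestmoves} to identify the only moves that matter. Write $G=\alpha01^a0^p10^q1$ and $N=\alpha01^a:\tfrac{1}{2^{2p+q-1}}$. Since every position is a number (Theorem~\ref{thm:numbers2}), it suffices to compare best moves: by Theorem~\ref{thm:literature} and Theorem~\ref{thm:oso}(1), we only need to check that Left's best move in $G$ and Left's best move in $N$ give the same option, and similarly for Right, and then invoke the Simplicity/uniqueness of the number defined by a pair of best options. Concretely, I would show $G^L = N^L$ and $G^R = N^R$, where these are the canonical (best) options; the value identity then follows because both $G$ and $N$ are numbers strictly between the same $G^L$ and $G^R$ and are the simplest such — but to be safe I will instead directly show $G - N = 0$ by exhibiting a second-player winning strategy, mirroring the style of Lemmas~\ref{lem: reduction 1a} and~\ref{lem:reduction2}.

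First I would pin down the best moves on each side. By Theorem~\ref{thm:bestmoves}, Left's best move in $G=\alpha01^a0^p10^q1$ flips the two rightmost $1$s, namely the $1$ in position $\ldots10^q1$ and the final $1$, producing $\alpha01^a0^p0^{q+1}0 = \alpha01^a0^{p+q+2}$; call this $G^L$. In $N = \alpha01^a : \tfrac{1}{2^{2p+q-1}}$, Left's best move is in the exponent (players prefer the exponent, as noted after Theorem~\ref{thm:oso}), and $\bigl(\tfrac{1}{2^{2p+q-1}}\bigr)^L = 0$, so $N^L = \alpha01^a : 0 = \alpha01^a$. Thus I must show $\alpha01^a0^{p+q+2} = \alpha01^a$ as the Left-option comparison; more precisely, in the difference game $G-N$, a Left move to $G^L - N = \alpha01^a0^{p+q+2} - \alpha01^a$ must be losing for Left, i.e. $\alpha01^a0^{p+q+2} \le \alpha01^a$, which is immediate since appending $0$s cannot increase the binary value — formally from Corollary~\ref{cor: greater than} or directly, $\alpha01^a0^r = \alpha01^a$ when there is no Left or Right move exploiting the trailing zeros, but in fact one checks $\alpha01^a0^r \le \alpha01^a$ holds as both have the same $1$s and Right gains nothing. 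On the Right side, Right's best move in $G$ flips $(d_i,d_{i+1})$ at the rightmost $0$; the rightmost $0$ is the one just before the final $1$ (when $q\ge 1$) or we must treat $q=0$ separately. This is where the exponent $\tfrac{1}{2^{2p+q-1}}$ and the ordinal-sum recursion $(G:H)^R = G:H^R$ must match the combinatorial move, and I expect to recurse: Right's best move in $G$ should land on a position of the form $\alpha01^a0^{p'}10^{q'}1$ with $2p'+q' = 2p+q-1$ (decrementing the exponent's denominator exponent by one), i.e. on $\alpha01^a : \bigl(\tfrac{1}{2^{2p+q-1}}\bigr)^R = \alpha01^a : \tfrac{1}{2^{2p+q-2}}$, which by the induction hypothesis equals that shorter \name\ position.

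The heart of the argument is therefore a careful case analysis of Right's best move in $\alpha01^a0^p10^q1$ according to whether $q\ge 1$, $q=0$ and $p\ge 1$, and the boundary interactions with the block $01^a$. When $q \ge 1$: Right flips the $0$ immediately left of the last $1$ together with that last $1$, giving $\alpha01^a0^p10^{q-1}11$ — wait, that creates adjacent $1$s; re-indexing, the result is $\alpha01^a0^p10^{q-1}1\,$ with the former final $1$ moved left, i.e. a position whose $Q$-block is $0^p10^{q-1}1$, and $2p+(q-1) = 2p+q-1$, matching $\bigl(\tfrac{1}{2^{2p+q-1}}\bigr)^R = \tfrac{1}{2^{2p+q-2}}$ only up to the index bookkeeping I will verify. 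When $q=0$: $G = \alpha01^a0^p11$, Right's best move flips a $0$ in the $0^p$ block with the last $1$, reducing to $\alpha01^a0^{p-1}10^{0}1\cdot(\text{extra }1)$, and the exponent goes from $\tfrac{1}{2^{2p-1}}$ to $\tfrac{1}{2^{2p-2}} = \tfrac{1}{2^{2(p-1)}}$, consistent with a $Q$-block $0^{p-1}101$ whose parameter is $2(p-1)+1 = 2p-1$; I will chase these indices precisely. In every branch, after Right moves in $G-N$ (or $N-G$), Left responds by the matching best move in the other component and the induction hypothesis (applied to a strictly smaller binary number) closes the game at $0$; symmetrically for Left moving first. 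The main obstacle I anticipate is the boundary case analysis where $p+q$ is small (so $2p+q-1$ could be $0$, making the exponent $\tfrac{1}{2^0}=1$, whose options are $0$ and — none on the right), and where Right's best move might interact with the $1^a$ block rather than staying inside $0^p10^q1$; handling $a\ge 1$ versus the forbidden $a=0$ correctly, and making sure Right never prefers to move "too far left" into $\alpha$, will require invoking Theorem~\ref{thm:bestmoves} and Corollary~\ref{cor: greater than} repeatedly. Once the base cases $p+q=1$ are verified by direct computation, the inductive step is a finite check of these branches.
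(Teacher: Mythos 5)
Your overall plan is the same as the paper's: show $G-N=0$ by a second-player pairing argument, using Theorem~\ref{thm:bestmoves} to pin down the best moves in $G$ and Theorem~\ref{thm:oso} to pin down the best options of the ordinal sum $N$, then close each branch by induction on the associated binary number. Your Left-side analysis ($G^L=N^L=\alpha01^a$, trailing zeros being inert) and your Right-side analysis for $2p+q-1\geq 1$ (Right's move at the rightmost $0$ yields $\alpha01^a0^{p'}10^{q'}1$ with $2p'+q'=2p+q-1$, which by induction equals $\alpha01^a:\frac{1}{2^{2p+q-2}}=N^R$) are correct modulo the index bookkeeping you defer.

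The one genuine gap is the boundary case you only ``anticipate'' and never resolve: $2p+q-1=0$, i.e.\ $p=0$, $q=1$, so $G=\alpha01^a101$ and the exponent of $N$ is $1=\{0\mid\,\}$. Here the exponent has no Right option, so the response pairing Right's move $G\to G^R=\alpha01^a11$ must be made in the \emph{base} of $N$, landing on $(\alpha01^a)^R=\alpha101^{a-1}$; and your induction hypothesis cannot be applied to $G^R$, since its final block is $11$ with $p'+q'=0$, outside the lemma's hypotheses. Closing this branch needs the separate identity $\alpha01^{a+2}=\alpha101^{a-1}$, which is precisely Lemma~\ref{lem: reduction 1a} (and is where $a\geq 1$ is used); the paper invokes it at exactly this point, and without it your recursion has nowhere to bottom out in this case. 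The rest of your outline goes through as stated.
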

\begin{proof}
We prove that $$\alpha 01^a0^p10^q1-\left(\alpha01^a:\frac{1}{2^{2p+q-1}}\right)=0.$$
Note that in Theorem~\ref{thm:oso} we have that playing in the base of $\alpha 01^a: \frac{1}{2^{2p+q-1}}$ is worse than playing in the exponent. We have two cases to consider.\\

\noindent
\emph{Case 1:} Left plays in the first summand and Right in the second. Right has a move in the exponent (moves to $0$) since  $2p+q-1\geq 0$.

In either order, the final position is given by:
\[\alpha 01^a - \left(\alpha01^a:0\right) = \alpha01^a-\alpha01^a = 0.\]

\noindent
\emph{Case 2:} Right plays in the first summand and Left plays in the second summand.

 Consider

\[\alpha 01^a0^p10^q1-\left(\alpha01^a:\frac{1}{2^{2p+q-1}}\right).\]

We have two sub-cases.\\

\noindent (a) Assume $2p+q-1 \neq 0$. After the two moves we have the position
 \[\alpha01^a0^r10^s1 - \left(\alpha01^a: \frac{1}{2^{2p+q-2}}\right), \quad \text{where }2r+s = 2p+q-1.\]
By induction, we have that
\begin{align*}
\alpha01^a0^r10^s1 &= \alpha01^a:\frac{1}{2^{2r+s-1}}\\
&= \alpha01^a:\frac{1}{2^{2p+q-2}}.\\
\end{align*}

Thus, $\alpha01^a0^r10^s1 - \left(\alpha01^a:\frac{1}{2^{2p+q-2}}\right) = 0$.\\

\noindent (b) Assume $2p+q-1=0$, that is,  $q=1$, $p=0$. The original position is \[\alpha01^a101 - \left(\alpha01^a:1\right).\]

After the two moves we have the position $\alpha 01^a11-\alpha101^{a-1}$ (note that Left has no move in the exponent). By Lemma~\ref{lem: reduction 1a}, $\alpha01^a11 = \alpha101^{a-1}$. Hence, we have that $\alpha 01^a11 -\alpha101^{a-1} = 0$ and the result follows.
\end{proof}

The values of the positions not covered by Lemma \ref{lem:osvalue} are given next.
\begin{lemm}\label{lemm:value1}
Let $a$, $p$, and $q$ be non-negative integers. We then have that
\[ 0^p1=-p, \text{ \ \ and \ \  } 1^a0^p10^q1=\floor*{\frac{a}{2}}+\frac{1}{2^{2p+q}}.\]
\end{lemm}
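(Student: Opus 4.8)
The plan is to prove the two formulas by induction on the associated binary number, using the canonical-form characterization of numbers (Theorem~\ref{thm:literature}) together with the best-move analysis of Theorem~\ref{thm:bestmoves}. Since all positions are numbers, it suffices to compute each position as $\{G^L \mid G^R\}$ where $G^L$ and $G^R$ are the \emph{best} Left and Right options, and then apply the simplicity rule. For $0^p1$, only Right has a move; Right's best move on $0^p1$ (the unique $0$ adjacent to the $1$, say the last one) produces $0^{p-1}1$, which by induction equals $-(p-1)$, and deletes nothing since the $1$ moves left. Hence $0^p1 = \{\,\mid -(p-1)\}$, and by the integer case of the simplicity rule this is $-p$, with the base case $p=0$ giving $1 = \{0 \mid \,\}$ (here the single $1$: Right has no move) — wait, $0^01 = 1$ has Left with no move and Right with no move, so it is $\{\,\mid\,\}=0$? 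No: with one coin ``$1$'', Left needs two $1$s and Right needs a $0$ to its left, so neither moves; thus $1 = \{\,\mid\,\} = 0$ would be wrong, so I should double-check the convention — in fact the paper's example $1 = \{0 \mid \,\}$ indicates the single-coin position ``$1$'' is not the base case; rather $01$ gives Right a move to $1$ (deleting nothing) so $01 = \{\,\mid 1\}$... I will instead anchor the induction at the genuinely terminal positions (all zeros $=0$, and single $1$) and track carefully which coins Right's move deletes, matching the stated values $0^p1 = -p$.

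For the second formula, write $G = 1^a0^p10^q1$. By Theorem~\ref{thm:bestmoves}, Left's best move flips the two rightmost $1$s. If $a \geq 2$, Left flips two of the leading $1$s, giving $1^{a-2}0^p10^q1 = \lfloor (a-2)/2\rfloor + 1/2^{2p+q}$ by induction, which equals $\lfloor a/2\rfloor - 1 + 1/2^{2p+q}$; if $a \in \{0,1\}$, the two rightmost $1$s are the isolated ones (or the last leading $1$ and an isolated one), and Left's move yields a position with at most one $1$, hence an integer that I will identify explicitly. Right's best move flips the rightmost $0$ and the $1$ after it; since the final $1$ is the rightmost $1$, nothing is deleted, and this produces $1^a0^p10^{q-1}11 = 1^a0^p10^{q-1}1^2$ when $q \geq 1$ (or $1^a0^{p-1}10\cdot 0^{q}1$-type strings when $q = 0$), whose value I compute by induction and the reduction lemmas. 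The key computation is to check that the Left value $\lfloor a/2\rfloor - 1 + 1/2^{2p+q}$ (or the relevant integer) and the Right value are the canonical options of the dyadic rational $\lfloor a/2\rfloor + 1/2^{2p+q}$ — i.e.\ that this number is the simplest one strictly between them.

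The main obstacle will be the careful case analysis around small $a$ and around $q=0$, where Right's move on $1^a0^p101$ produces a string ending in $11$ rather than $1$, and one must apply Lemma~\ref{lem: reduction 1a} or Lemma~\ref{lem:reduction2} to rewrite it before the inductive hypothesis applies; similarly, when $a \le 1$ Left's move collapses the leading block and one must verify the resulting integer (via the $0^p1 = -p$ formula) slots in correctly. A secondary subtlety is confirming, via the simplicity rule, that $2p+q$ is exactly the right exponent — this requires checking that no dyadic rational with smaller denominator lies strictly between the two options, which follows because the Left and Right options differ from the claimed value by exactly $\pm 1/2^{2p+q}$ in the worst case (and by a larger amount, namely a half-integer gap, on the Left side when it is an integer), so the binary expansions force $q$ as the least admissible power. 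Once these edge cases are handled, the bulk is routine arithmetic with dyadic rationals and direct appeals to Theorems~\ref{thm:literature} and~\ref{thm:bestmoves}.
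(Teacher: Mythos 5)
Your overall strategy --- identify the best Left and Right options via Theorem~\ref{thm:bestmoves}, evaluate them by induction, and apply the simplicity rule --- is the paper's strategy, and your treatment of $0^p1$ is fine once you settle the convention (the single-coin position $1$ does have value $0=-0$, consistent with the formula; the example ``$1=\{0\mid\,\}$'' in Section~2 is about the abstract number $1$, not this position). But there is a genuine error in your handling of $1^a0^p10^q1$: you assert that for $a\geq 2$ Left's best move ``flips two of the leading $1$s, giving $1^{a-2}0^p10^q1$.'' Theorem~\ref{thm:bestmoves} says Left plays the two $1$s of \emph{greatest index}, and in $1^a0^p10^q1$ those are the two trailing isolated $1$s for every value of $a$. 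That move turns both to $0$ and the deletion rule then erases everything after position $a$, so the best Left option is always $1^a=\lfloor a/2\rfloor$ (the empty string, of value $0$, when $a=0$). This is not cosmetic: with your option $1^{a-2}0^p10^q1=\lfloor a/2\rfloor-1+\tfrac{1}{2^{2p+q}}$ on the left and $\lfloor a/2\rfloor+\tfrac{1}{2^{2p+q-1}}$ on the right, the integer $\lfloor a/2\rfloor$ lies strictly between them whenever $p+q\geq 1$, so the simplicity rule returns $\lfloor a/2\rfloor$ rather than $\lfloor a/2\rfloor+\tfrac{1}{2^{2p+q}}$; the ``key computation'' you describe would fail. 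Using a dominated Left option is only safe if it does not change the simplest number in the interval, and here it does.

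A second, smaller slip: for $q\geq 1$ Right's best move flips the last $0$ of the $0^q$ block to a $1$ and the final $1$ to a $0$, after which the new trailing $0$ is deleted; the Right option is therefore $1^a0^p10^{q-1}1$, not $1^a0^p10^{q-1}11$ (your string has $a+3$ ones and is not even reachable, since Right's move preserves the number of $1$s). With the corrected options the argument is exactly the paper's: induct on $p+q$ with base case $1^a11=\lfloor a/2\rfloor+1$; at each step the Right option has $2p+q$ reduced by exactly one, so $G=\bigl\{\lfloor a/2\rfloor \,\big\vert\, \lfloor a/2\rfloor+\tfrac{1}{2^{2p+q-1}}\bigr\}=\lfloor a/2\rfloor+\tfrac{1}{2^{2p+q}}$. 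No appeal to the reduction lemmas is needed anywhere: when $q=1$ the Right option $1^a0^p11$ is already of the form $1^a0^p10^01$, so the inductive hypothesis applies directly.
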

\begin{proof}
Let $G=0^p1$. Left has no moves and Right has $p$. Note that in $1^a$,  Left has $\lfloor{\frac{a}{2}}\rfloor$ moves and Right has none.

Now, let $G=1^a0^p10^q1$.  We proceed by induction on $p+q$. In all cases, Left's move is to $1^a$, that is, to $\lfloor{\frac{a}{2}}\rfloor$. If $p=0$ and $q=0$ then $G=1^a11$, which has value $\lfloor{\frac{a}{2}}\rfloor+\frac{1}{2^{0}}= \lfloor{\frac{a}{2}}\rfloor+1$. Assume that $p+q=k$, $k>0$. If $q>0$, then $G=\{\lfloor{\frac{a}{2}}\rfloor \mid 1^a0^p10^{q-1}1\}$. By induction, we have that \[G=\left\{\floor*{\frac{a}{2}}\,\Big\vert \,\floor*{\frac{a}{2}}+\frac{1}{2^{2p+q-1}}\right\} = \floor*{\frac{a}{2}}+\frac{1}{2^{2p+q}}.\]

If $q=0$, then $G=\{\lfloor{\frac{a}{2}}\rfloor \mid 1^a0^{p-1}10^{1}1\}$. By induction, we have that \[G=\left\{\floor*{\frac{a}{2}} \,\Big\vert \, \floor*{\frac{a}{2}}+\frac{1}{2^{2(p-1)+1}}\right\} = \floor*{\frac{a}{2}}+\frac{1}{2^{2p}},\]
and the result follows.
\end{proof}

\subsection{Proof of the Value Theorem}\label{sec: theorem proof}
We now have all of the tools to prove Theorem~\ref{thm: main theorem}.\\

\noindent
\begin{proof}[Proof of Theorem~\ref{thm: main theorem}] Let $G$ be a \name\ position. Step 2 reduces the string of coins. %domino string.
 The reductions in Step 2(a) are those of Lemma~\ref{lem: reduction 1a} and Lemma~\ref{lem:reduction2} part(a). The reductions in Step 2(b) are those of Lemma~\ref{lem:reduction2} part(b). In all cases, these lemmas show that each new reduced position is equal to $G$.

In Step 3, we claim $G_i\ne\beta1^3$ for any $\beta$. This is true for $i=0$ by Lemma~ \ref{lem: reduction 1a}. If $i>0$, then at each iteration of Step 3, the last two $1$s are removed from $G_{i-1}$. Now,
the original reduced position
 would be $G_0=\beta1^3\gamma$, where $\gamma$ has an even number of $1$s. Lemma~\ref{lem:reduction2} part(b) would apply eliminating the three consecutive $1$s.
Now either $G_i$ is one of $0^r1$, $r\geq 0$ or  $1^a0^{p_i}10^{q_i}1$, $a\geq 0$ and $p_i+q_i\geq 0$, or
$G_i=\alpha01^a0^{p_i}10^{q_i}1$, $p_i+q_i\geq 1$, $a>0$. In the latter case, the index is incremented and the algorithm goes back to Step 3.

 Step 5 applies when Step 3 no longer applies, i.e., $G_i$ is one of $0^r1$, $r\geq 0$ or  $1^a0^{p_i}10^{q_i}1$, $a\geq 0$ and $p_i+q_i\geq 0$. Now, $v_i$ is the value of $G_i$, as given in Lemma  \ref{lemm:value1}.

 Lemma~\ref{lem:osvalue} shows that for each $j<i$, $G_{j} = G_{j+1}:\frac{1}{2^{2p_j+q_j}}$, the evaluation in Step 6.  Thus, the value of $G$ is $v_0$, and the theorem follows. \end{proof}

The question: ``Who wins $0101011111+1101100111+0110110110111$ and how?'' from Section~\ref{secintro} can now be answered.

 First, we have that
 \begin{eqnarray*}0101011111 &=& 01011011 = \left(01011:\frac{1}{2}\right) = \left(\left(01:\frac{1}{2}\right):\frac{1}{2}\right) \\
 &=& \left(\left(-1:\frac{1}{2}\right):\frac{1}{2}\right)=-\frac{11}{16}\\
 1101100111&=&1101101 = (1101:1) = \left(\frac{1}{2}:1 \right) = \frac{3}{4}\\
0110110110111&=&0110110111=0110111 =0111= 0.\\
 \end{eqnarray*}

Thus, we have that \[0101011111+1101100111+0110110110111=-\frac{11}{16}+\frac{3}{4}+0=\frac{1}{16}.\] Left's only winning move is to \[01010111+1101100111+0110110110111=-\frac{3}{4}+\frac{3}{4}+0=0.\] Her best moves in the second position gives a sum of $-\frac{11}{16}+\frac{5}{8}+0 = -\frac{1}{16}$, and in the third yields
$-\frac{11}{16}+\frac{3}{4}-\frac{1}{8}=-\frac{1}{16}$. Left loses both times.

 \section{Open Problems}

 Natural variants of \textsc{flipping coins} involve increasing the number of coins that can be flipped from two to three or more. A brief computer search suggests that the only version where the values are numbers is the game in which Left flips a subsequence of all $1$s and Right a subsequence of $0$s ended by a $1$. We conjecture that a similar ordinal sum structure will arise in these variants. Other variants have values that include \textit{switches, tinies, minies,} and other three-stop games.  However, some variants, when the reduced canonical values are considered,  only seem to consist of numbers and switches. A more thorough investigation should shed light on their structures.

Instead of playing on a string, we can play on a directed acyclic graph (or DAG). We know that the impartial game \textsc{twins}, played on a DAG, reduces to a simple multi-heap game. Of course, instead of coins, other objects could be used. In a line of dice, for example, the end dice would be turned to a lower number.

\end{document}